\newtheorem*{acknowledgement}{Acknowledgement}
\newtheorem{corollary}{Corollary}
\newtheorem{definition}{Definition}
\newtheorem{lemma}{Lemma}
\newtheorem{proposition}{Proposition}
\newtheorem{theorem}{Theorem}
\newtheorem{example}{Example}
\numberwithin{equation}{section}
\renewcommand{\div}{{\rm div}}
\newcommand{\R}{\mathring{R}}
\newcommand{\Rc}{\mathring{Ric}}
\begin{document}

\title[quasi Einstein manifolds]{Compact quasi-Einstein manifolds with boundary}
\author{Rafael Di\'ogenes}
\author{Tiago Gadelha}

\address[R. Di\'ogenes]{UNILAB, Instituto de Ci\^encias Exatas e da Natureza, Rua Jos\'e Franco de Oliveira, s/n, 62790-970, Reden\c{c}\~ao / CE, Brazil.}\email{rafaeldiogenes@unilab.edu.br}

\address[T. Gadelha]{Instituto Federal do Cear\'a - IFCE, Campus Maracana\'u, Av. Parque Central, S/n, 61939-140, Maracana\'u / CE, Brazil.}\email{tiago.gadelha@ifce.edu.br}

\thanks{T. Gadelha was partially supported by FUNCAP/Brazil and CAPES/Brazil - Finance code 001.}

\begin{abstract}
The goal of this article is to study compact quasi-Einstein manifolds with boundary. We provide boundary estimates for compact quasi-Einstein manifolds simi\-lar to previous results obtained for static and $V$-static spaces. In addition, we show that compact quasi-Einstein manifolds with connected boundary and satisfying a suitable pinching condition must be isometric, up to scaling, to the standard hemisphere $\mathbb{S}_{+}^{n}.$ 
\end{abstract}

\date{\today}

\keywords{quasi-Einstein manifolds; boundary estimate; B\"ochner type formula}

\subjclass[2010]{Primary 53C20, 53C25; Secondary 53C65.}

\maketitle

\section{Introduction}
A classical topic in Riemannian Geometry is to built explicit examples of Einstein manifolds. Einstein manifolds are not only interesting in themselves but are also related to many important topics of Riemannian geometry. As discussed by Besse \cite[pg. 265]{Besse}, one promi\-sing way to construct Einstein metrics is by imposing symmetry, such as by considering warped products. In this context, $m$-quasi-Einstein manifolds play a crucial role.

According to \cite{CaseShuWey}, a complete Riemannian manifold $(M^n,\,g),$ $n\geq 2,$ will be called $m$-{\it quasi-Einstein manifold}, or simply {\it quasi-Einstein manifold}, if there exists a smooth potential function $f$ on $M^n$ satisfying the following fundamental equation
\begin{equation}
\label{eqqem}
Ric_{f}^{m}=Ric+\nabla ^2f-\frac{1}{m}df\otimes
df=\lambda g,
\end{equation} for some constants $\lambda$ and $m\neq 0,$ where $\nabla ^2f$ stands for the Hessian of $f.$ It is also important to recall that, on a quasi-Einstein manifold, there is a constant  $\mu$ such that 
\begin{equation}\label{2eq}
\Delta_{f} f = m\lambda-m\mu e^{\frac{2}{m}f},
\end{equation} where $\Delta_{f}=\Delta -\langle\nabla f,\,\cdot\,\,\rangle$ is the $f$-Laplacian. For more details, we refer the reader to \cite{KK}. 

The $m$-Bakry-Emery Ricci tensor $Ric_{f}^{m}$ plays a crucial role in the study of Einstein warped products. For instance, when $m$ is a positive integer, an $m$-quasi-Einstein manifold corresponds to a base of Einstein warped products; for more details see Corollary 9.107 in \cite[pg. 267]{Besse} (see also Theorem 1 in \cite{Ernani2}). Another interesting motivation comes from the study of diffusion operators by Bakry and \'Emery.  When $m=1$ we consider in addition that $\Delta u=-\lambda u$ and in this case, such metrics are commonly called \textit{static spaces}. Static spaces have been studied extensively for their connections to scalar curvature, the positive mass theorem, and general relativity (see \cite{AN}). In other words, $m$-quasi-Einstein manifolds can be seen as a generalization of static spaces. Furthermore, $\infty$-quasi-Einstein manifold is a gradient Ricci soliton, i.e, self-similar solutions of the Ricci flow. There is a vast literature on quasi-Einstein manifolds, we refer to the reader, for instance \cite{Ernani2,BRR2019,Besse,CaseShuWey,Case,He-Petersen-Wylie2012,Ernani_Keti,rimoldi,Wang2011}.

Assuming that $m<\infty,$ we may set a function $u = e^{-\frac{f}{m}}$ on $M^n.$ Hence, we immediately get 
 
 \begin{equation}
 \label{1a}
 \nabla u = -\frac{u}{m}\nabla f
 \end{equation} as well as

\begin{equation}\label{fu}
Hess f - \frac{1}{m}df \otimes df = -\frac{m}{u}Hess\, u.
\end{equation} In this situation, we follow the approach used by He, Petersen and Wylie \cite{He-Petersen-Wylie2012} in order to study quasi-Einstein metrics on compact manifolds with boundary. To fix notation, we consider the following definition (see \cite{He-Petersen-Wylie2012}).

\begin{definition}
\label{def}
A complete Riemannian manifold $(M^n,\,g),$ $n\geq 2,$ (possibly with boundary) will be called $m$-{\it quasi-Einstein manifold}, or simply {\it quasi-Einstein manifold}, if there exists a smooth potential function $u$ on $M^n$ satisfying the following fundamental equation
\begin{equation}\label{eqdef}
\nabla^{2}u = \dfrac{u}{m}(Ric-\lambda g),
\end{equation} for some constants $\lambda$ and $m$. Moreover, $u>0 $ in $int(M)$ and $u=0$ on $\partial M$. Here, $\nabla^{2} u$ and $Ric$ stand for the Hessian of $u$ and the Ricci curvature of $M^n,$ respectively.
\end{definition}

In \cite{He-Petersen-Wylie2012} and \cite{Petersen and Chenxu}, He, Petersen and Wylie studied compact quasi-Einstein manifold with no empty boundary. They obtained a classification result for quasi-Einstein manifolds which are also Einstein manifold. Moreover, they provided some nontrivial examples, including the Generalized Schwarzschild metric with $\lambda=0$ (see \cite{He-Petersen-Wylie2012}). Moreover, they studied $m$-quasi-Einstein manifolds with constant scalar curvature (see  \cite{Petersen and Chenxu}). In this article, we focus in nontrivial compact $m$-quasi-Einstein manifolds with boundary. Hence, by Theorem 4.1 of \cite{He-Petersen-Wylie2012}, they have necessarily $\lambda>0.$ Moreover, by Remark 5.1 of \cite{He-Petersen-Wylie2012} the scalar curvature is positive more precisely $R\geq\frac{n(n-1)}{m+n-1}\lambda.$ Among the examples obtained in \cite[Proposition 2.4]{Petersen and Chenxu}, it is very important to highlight an example built on the stardard hemisphere.

\begin{example}
\label{example}
Let  $\Bbb{S}^n_+$ be a standard hemisphere with metric $g=dr^2+\sin^2r g_{\Bbb{S}^{n-1}}$ and potential function $u(r)=\cos r,$ where $r$ is a height function and $r\leq\frac{\pi}{2}.$ We consider $m$ to be an arbitrary (but finite) constant and $\lambda=m+n-1.$ Thus, $\Bbb{S}^n_+$ is a compact, oriented $m$-quasi-Einstein manifold with boundary $\Bbb{S}^{n-1}.$ Indeed, it is easy to check that $u=\cos\frac{\pi}{2}=0$ on the boundary and $u>0$ on interior of $\Bbb{S}^n_+.$ Moreover, since $\nabla^2r=\sin r\cos r g_{\Bbb{S}^{n-1}}$ (see Chapter 3, Section 2 of \cite{petersen}) we obtain $\nabla u=-\sin r\nabla r$ and 
\begin{eqnarray*}
\nabla^2u&=&-\sin r\nabla^2r-\cos rdr\otimes dr\\
 &=&-\sin^2r\cos r g_{\Bbb{S}^{n-1}}-\cos rdr^2\\
 &=&-\cos r\left(dr^2+\sin^2rg_{\Bbb{S}^{n-1}}\right)\\
 &=&-ug.
\end{eqnarray*}

On the other hand, we already know that $Ric=(n-1)g$ and therefore, we obtain 
\begin{eqnarray*}
\frac{u}{m}(Ric-\lambda g)&=&\frac{u}{m}\left((n-1)g-(m+n-1)g\right)\\
 &=&-ug,
\end{eqnarray*} so that $$\nabla^2u=\frac{u}{m}(Ric-\lambda g),$$ as asserted. 
\end{example}

In the sequel, we present an example of compact $m$-quasi-Einstein manifold with disconnected boundary.

\begin{example}\label{example2}
Let $\lambda>0$ be a real constant and $M=[0,\frac{\sqrt{m}}{\sqrt{\lambda}}\pi]\times\Bbb{S}^{n-1}$ be a product Riemannian with metric $g=dt^2+\frac{n-2}{\lambda}g_{\Bbb{S}^{n-1}}$ and potential function $u(r)=\sin\left(\frac{\sqrt{\lambda}}{\sqrt{m}}t\right).$ Thus, $M$ is a compact, oriented $m$-quasi-Einstein manifold with disconnected boundary (the boundary is the union of two copies of $\Bbb{S}^{n-1}).$ To prove this, it suffices to notice that $u>0$ on $int(M)$ and taking into account that $$\partial M=\{0\}\times\Bbb{S}^{n-1}\cup\left\{\frac{\sqrt{m}}{\sqrt{\lambda}}\pi\right\}\times\Bbb{S}^{n-1}=\partial M_1\cup\partial M_2,$$ we have $u=\sin(0)=0$ on $\partial M_1$ and $u=\sin(\pi)=0$ on $\partial M_2.$ Therefore, $u=0$ on $\partial M.$  Moreover, since $\nabla^2t=0$ (see Chapter 3, Section 2 of \cite{petersen}) we get $$\nabla u =\frac{\sqrt{\lambda}}{\sqrt{m}}\cos\left(\frac{\sqrt{\lambda}}{\sqrt{m}}t\right)\nabla t$$ and 
\begin{eqnarray*}
\nabla^2u&=&\frac{\sqrt{\lambda}}{\sqrt{m}}\cos\left(\frac{\sqrt{\lambda}}{\sqrt{m}}t\right)\nabla^2t-\frac{\lambda}{m}\sin\left(\frac{\sqrt{\lambda}}{\sqrt{m}}t\right)dt^2\\
 &=&-\frac{\lambda}{m}udt^2.
\end{eqnarray*} Next,  recalling that $Ric=(n-2)g_{\Bbb{S}^{n-1}}$ we obtain
\begin{eqnarray*}
\frac{u}{m}(Ric-\lambda g)&=&\frac{u}{m}\left((n-2)g_{\Bbb{S}^{n-1}}-\lambda dt^2-(n-2)g_{\Bbb{S}^{n-1}}\right)\\
 &=&-\frac{\lambda}{m}udt^2.
\end{eqnarray*} So, it follows that $$\nabla^2u=\frac{u}{m}(Ric-\lambda g),$$ as we wanted to prove. 
\end{example}

It should be remarked that if a compact nontrivial $m$-quasi-Einstein manifold with no empty boundary has constant scalar curvature, then $R\leq \lambda n.$ Indeed, taking the trace in (\ref{eqdef}) we deduce $m\Delta u=u(R-\lambda n).$ Hence, if $R>\lambda n,$ then $\Delta u>0.$ But, by using the Maximum Principle we conclude that $u$ have the maximum on the boundary and therefore, $u$ is identically zero, which leads to a contradiction. Note that in Example \ref{example} we have $R=n(n-1)$ and $\lambda=m+n-1,$ thus $R-n\lambda=-mn.$ In Example \ref{example2} we have $R=(n-1)\lambda$ and $R-n\lambda=-\lambda,$ for any $\lambda>0.$

It is natural to ask whether Example \ref{example}  is the only simply connected and compact $m$-quasi-Einstein manifold with one connected boundary component. This question can be investigate by several ways. In this paper we will treat this problem. Before to state our main results, let us  provide a historical motivation comes from the study of static and $V$-static spaces. A result by Shen \cite{Shen} and Boucher-Gibbons-Horowitz \cite{BGH} asserts that the boundary $\partial M$ of a compact $3$-dimensional oriented static manifold with connected boundary and scalar curvature 6 must be a 2-sphere whose area satisfies $|\partial M|\leq 4\pi.$ Moreover, the equality holds if and only if $M^{3}$ is equivalent to the standard hemisphere. Batista, Di\'{o}genes, Ranieri and Ribeiro \cite{Batista-Diogenes-Ranieri-Ribeiro JR} showed the boundary of a $3$-dimensional compact, oriented critical metric of the volume functional with connected boundary $\partial M$ and nonnegative scalar curvature, must be a 2-sphere and its area satisfies $|\partial M|\leq \frac{4\pi}{C(R)}$, where $C(R)$ is a positive constant. Moreover, the equality holds if and only if $(M^{3},g)$ is isometric to a geodesic ball in a simply connected space form $\mathbb{R}^{3}$ or  $ \mathbb{S}^{3}.$ In the same spirit, Barros and Silva \cite{BS} proved that a compact, oriented static triple with connected boundary $\partial M$ and constant positive normalized scalar curvature $R=n(n-1),$ then $|\partial M|\leq\omega_{n-1},$ where $\omega_{n-1}$ is the volume of unitary sphere of dimension $n-1.$ Moreover, the equality is attained only for a round hemisphere $S_{+}^{n}.$ This result provides a partial answer to the cosmic no-hair conjecture formulated by Boucher {\it et al.} \cite{BGH}. Similar boundary estimates were obtained recently by Coutinho {\it et al.} \cite{CDLR} for static perfect fluid space-time of arbitrary dimension.

Based on the above result, Example \ref{example} and taking into account that $m$-quasi-Einstein manifolds can be seen as a generalization of static spaces (i.e., $m=1$), in this article, mainly inspired on ideas developed in \cite{Lucas Ambrosio,Batista-Diogenes-Ranieri-Ribeiro JR,BS,CDLR}, we will investigate boundary estimate for compact $m$-quasi-Einstein manifolds with connected boundary. In this sense, we have established the following.

\begin{theorem}\label{th1} Let $\big(M^{n},\,g,\,u,\,\lambda \big),$ $n\geq 4,$ be a compact, oriented $m$-quasi-Einstein manifold with connected boundary. Suppose that $Ric^{\partial M}\geq \dfrac{R^{\partial M}}{n-1}g_{_{\partial M}}$ with $\inf _{\partial M}R^{\partial M}>0$ and that the scalar curvature of $M^n$ satisfies $R\leq n\lambda.$ Then it holds
\begin{equation*}
|\partial M| \leq {\omega_{n-1}}\left(\frac{n(n-1)}{R_{min}}\right)^{\frac{n-1}{2}}.
\end{equation*} Moreover, the equality holds if and only if $(M^{n},\,g)$ is isometric, up to scaling, to the hemisphere $(\Bbb{S}^n_+,\,g)$ given by Example \ref{example}.	
\end{theorem}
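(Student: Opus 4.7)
The plan is to transfer the fundamental equation into geometric information along $\partial M$ and then close by a Bishop--Gromov comparison on the boundary. First I would exploit that $u$ vanishes on $\partial M$: substituting into $\nabla^{2}u=\frac{u}{m}(Ric-\lambda g)$ gives $\nabla^{2}u\equiv 0$ on $\partial M$. Pairing this with two tangent vectors forces the second fundamental form to vanish, so $\partial M$ is totally geodesic; and since $\nabla|\nabla u|^{2}=2\nabla^{2}u(\nabla u,\cdot)$ also vanishes on $\partial M$, the norm $|\nabla u|$ is a positive constant $c$ along the connected boundary, with $\nabla u=-c\nu$ where $\nu$ is the outward unit normal.

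The second step is a differential identity. Taking the divergence of the fundamental equation, using the twice-contracted Bianchi identity $\div(Ric)=\tfrac12\nabla R$ together with the Ricci commutation formula, and substituting $\Delta u=\tfrac{u}{m}(R-n\lambda)$, one reaches
\begin{equation*}
(m-1)\,Ric(\nabla u)=-\tfrac{u}{2}\nabla R-(R-(n-1)\lambda)\nabla u
\end{equation*}
on $M$. Restricting to $\partial M$ makes $\nu$ an eigenvector of $Ric$ with $Ric(\nu,\nu)=\frac{(n-1)\lambda-R}{m-1}$ (the case $m=1$ being the static setting treated by earlier work). Combining with the Gauss equation for the totally geodesic boundary, $R^{\partial M}=R-2\,Ric(\nu,\nu)$, gives $R^{\partial M}=\frac{(m+1)R-2(n-1)\lambda}{m-1}$ on $\partial M$.

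Next, the tensor $Ric^{\partial M}-\tfrac{R^{\partial M}}{n-1}g_{\partial M}$ is positive semidefinite with zero trace, hence identically zero, so $\partial M$ is Einstein. Since $\dim\partial M=n-1\geq 3$, Schur's lemma yields $R^{\partial M}$ constant, and the formula above then forces $R|_{\partial M}$ to be constant as well. Bishop--Gromov applied to the closed Einstein manifold $\partial M$ (with $Ric^{\partial M}=\tfrac{R^{\partial M}}{n-1}g_{\partial M}>0$) gives $|\partial M|\leq\omega_{n-1}\bigl(\tfrac{(n-1)(n-2)}{R^{\partial M}}\bigr)^{(n-1)/2}$. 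Combining the hypothesis $R\leq n\lambda$ with the lower bound $R\geq\tfrac{n(n-1)\lambda}{m+n-1}$ recalled from \cite{He-Petersen-Wylie2012} and the above expression for $R^{\partial M}$, a short algebraic manipulation yields $nR^{\partial M}\geq(n-2)R_{\min}$, which converts this inequality into the stated bound.

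The main obstacle is the equality case. Equality cascades back through the chain: Bishop--Gromov rigidity makes $\partial M$ isometric to a round $(n-1)$-sphere of the appropriate radius, and equality in the two scalar-curvature bounds forces $R\equiv n\lambda$ on $M$ together with equality in the He--Petersen--Wylie estimate. To complete the classification one must extend this rigidity from $\partial M$ into the interior: I would analyze $|\nabla u|^{2}$ along the flow of $\nabla u/|\nabla u|$, show it depends only on $u$, and identify the resulting warped-product structure with that of Example \ref{example}. This interior propagation, rather than the boundary algebra above, is where the substantive work lies.
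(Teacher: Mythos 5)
Your boundary analysis is sound (totally geodesic boundary, $|\nabla u|$ constant), your pointwise identity $(m-1)\,Ric(\nabla u)=-\tfrac{u}{2}\nabla R-(R-(n-1)\lambda)\nabla u$ is correct, and the observation that the hypothesis forces $Ric^{\partial M}=\tfrac{R^{\partial M}}{n-1}g_{\partial M}$ (trace-free plus positive semidefinite) is a nice strengthening the paper never exploits. However, the "short algebraic manipulation" giving $nR^{\partial M}\geq (n-2)R_{\min}$ is not $m$-uniform. Your formula $R^{\partial M}=\frac{(m+1)R-2(n-1)\lambda}{m-1}$ requires dividing by $m-1$: for $m>1$ the desired inequality does follow (using $R|_{\partial M}\geq R_{\min}\geq\frac{n(n-1)\lambda}{m+n-1}$), but for $0<m<1$ the inequality direction flips and the bound no longer follows from $R\leq n\lambda$ and the He--Petersen--Wylie estimate (one would need $R_{\min}\geq\frac{n\lambda}{1-m}$, which is not available), and at $m=1$ the identity degenerates, yielding only $R=(n-1)\lambda$ on $\partial M$ and no value for $Ric(\nu,\nu)$. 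Since the theorem explicitly includes $m=1$ (it is advertised as generalizing the static case), deferring that case to "earlier work" leaves the stated result unproved. The paper avoids any case split by an integral route: integrating $\div(\mathring{Ric}(\nabla u))$ with Stokes' theorem (Lemmas \ref{lemma1111} and \ref{lemmaRboundary}) gives $\int_{\partial M}R^{\partial M}dS\geq\frac{2}{m\kappa}\int_M u|\mathring{Ric}|^2dM+\frac{n-2}{n}R_{\min}|\partial M|$, which needs only $m>0$.

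The second, more serious gap is the equality case, which you explicitly leave as a plan. In the paper's chain, equality forces the interior term $\int_M u|\mathring{Ric}|^2dM$ to vanish, hence $M$ is Einstein, and Proposition 2.4 of \cite{Petersen and Chenxu} then identifies $(M,g)$ with the hemisphere; the rigidity is essentially free. Your chain contains no such interior term: equality only yields that $\partial M$ is a round sphere and that the scalar-curvature inequalities you used are saturated (note this gives $R_{\min}=\frac{n(n-1)\lambda}{m+n-1}$, not $R\equiv n\lambda$ as you assert), and propagating this boundary rigidity into the interior via a warped-product analysis of $|\nabla u|$ is precisely the substantive step you have not carried out. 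As written, the "if and only if" part of the theorem is therefore unproved, and the inequality itself is only established for $m>1$.
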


It should be emphasized that Theorem \ref{th1} can be seen as a generalization of Theorem 4 in \cite{BS} for static spaces, i.e., $1$-quasi-Einstein manfold. Next, in the $3$-dimensional case, we have the following result.

\begin{theorem}\label{th2}
Let $\big(M^{3},\,g,\,u,\,\lambda \big)$ be a $3$-dimensional compact, oriented $m$-quasi-Einstein manifold with connected boundary. Suppose that the scalar curvature of $M^3$ satisfies $R\leq3\lambda.$ Then, it holds
\begin{equation*}
|\partial M| \leq\frac{24\pi}{R_{min}}.
\end{equation*} Moreover, the equality holds if and only if $(M^{3},g)$ is isometric, up to scaling, to the hemisphere $(\Bbb{S}^3_+,g)$ given by Example \ref{example}.
\end{theorem}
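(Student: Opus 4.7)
The plan is to apply Gauss--Bonnet to the boundary surface $\partial M$ after computing its intrinsic Gauss curvature in closed form from~(\ref{eqdef}). Evaluating~(\ref{eqdef}) on $\partial M$ (where $u\equiv 0$) immediately yields $\nabla^{2}u\equiv 0$ on $\partial M$. Writing $\nabla u=-|\nabla u|\,\nu$ with $\nu$ the outward unit normal (note $\nabla u\neq 0$ on $\partial M$ by Hopf's lemma applied to the trace equation $\Delta u=(u/m)(R-n\lambda)$), the identity $\nabla^{2}u(X,Y)=|\nabla u|\,II(X,Y)$ for tangent $X,Y$ forces $\partial M$ to be totally geodesic; and $X(|\nabla u|^{2})=2\nabla^{2}u(X,\nabla u)=0$ shows that $|\nabla u|$ is a positive constant on the connected boundary.

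Next, taking the divergence of~(\ref{eqdef}) and combining the Ricci commutation $\Delta\nabla u=\nabla\Delta u+Ric(\nabla u)$, the trace equation $\Delta u=(u/m)(R-n\lambda)$, and the contracted second Bianchi identity $\div(Ric)=\tfrac{1}{2}\nabla R$, I would derive the fundamental identity
\[
(m-1)\,Ric(\nabla u)\;=\;-\bigl(R-(n-1)\lambda\bigr)\,\nabla u\;-\;\tfrac{u}{2}\nabla R.
\]
Restricting to $\partial M$ in dimension $3$ (the $\nabla R$ term drops because $u=0$) and assuming $m\neq 1$ yields
\[
Ric(\nu,\nu)\;=\;\frac{2\lambda-R}{m-1};
\]
the static case $m=1$ forces $R\equiv 2\lambda$ globally and reduces directly to~\cite{BS}. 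Substituting into the Gauss equation for a totally geodesic surface in a $3$-manifold, $K^{\partial M}=\tfrac{1}{2}(R-2\,Ric(\nu,\nu))$, gives
\[
K^{\partial M}\;=\;\frac{(m+1)R-4\lambda}{2(m-1)}.
\]

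Combining the hypothesis $R\leq n\lambda$ with the pointwise lower bound $R\geq \tfrac{n(n-1)\lambda}{m+n-1}$ from Remark~5.1 of~\cite{He-Petersen-Wylie2012}, a short algebraic check shows $K^{\partial M}\geq R/6\geq R_{min}/6$ on $\partial M$. Integrating and invoking Gauss--Bonnet on the connected closed oriented surface $\partial M$,
\[
\tfrac{R_{min}}{6}\,|\partial M|\;\leq\;\int_{\partial M}K^{\partial M}\,dA\;=\;2\pi\chi(\partial M)\;\leq\;4\pi,
\]
which is the desired bound $|\partial M|\leq 24\pi/R_{min}$.

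For the rigidity statement, saturation forces $\chi(\partial M)=2$ (so $\partial M\cong\mathbb{S}^{2}$), $R\equiv R_{min}$ on $\partial M$, and equality in the HPW bound on $\partial M$ (hence $R=\tfrac{6\lambda}{m+2}$ there). The main obstacle is propagating these boundary equalities into the interior to identify $(M^{3},g,u)$ with the hemisphere of Example~\ref{example}. Here I would use the hypothesis $R\leq 3\lambda$ together with a Bochner-type integration by parts applied to the traceless tensor $\nabla^{2}u-\tfrac{\Delta u}{n}g$ (equivalently the traceless Ricci $\Rc$) to force that tensor to vanish identically on $M$, and then integrate the resulting ODE for $u$ along the gradient flow of $\nabla u$ to recover the warped-product metric $dr^{2}+\sin^{2}r\,g_{\mathbb{S}^{n-1}}$ and potential $u=\cos r$ of Example~\ref{example}.
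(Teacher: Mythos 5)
Your inequality half is a genuinely different route from the paper's, and for $m>1$ it is correct: the contracted identity $(m-1)\,Ric(\nabla u)=-(R-(n-1)\lambda)\nabla u-\tfrac{u}{2}\nabla R$ is valid (it is precisely the trace of the paper's Lemma \ref{lemmaDifference}), and with the Gauss equation for the totally geodesic boundary it gives your pointwise formula for $K^{\partial M}$; the paper instead never computes $K^{\partial M}$ pointwise, but integrates $\div(\mathring{Ric}(\nabla u))$ to get the integral identities of Lemmas \ref{lemma1111} and \ref{lemmaRboundary} and then applies Gauss--Bonnet. The first genuine gap is the dependence on the sign of $m-1$: your ``short algebraic check'' that $K^{\partial M}\geq R/6$ amounts, after clearing $2(m-1)$, to $(2m+4)R\geq 12\lambda$, i.e. $R\geq 6\lambda/(m+2)$, which is the HPW lower bound only when $m>1$; for $0<m<1$ the division reverses the inequality and you would need $R\leq 6\lambda/(m+2)$ on $\partial M$, which is not available (HPW gives the opposite bound, and $R\leq 3\lambda$ is weaker since $6\lambda/(m+2)<3\lambda$ for $m>0$). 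So as written your argument covers only $m\geq1$, whereas the paper's integral argument is uniform in $m>0$. (Your reduction of $m=1$ is salvageable but unproved as stated: from your identity with $m=1$ one gets $\nabla\bigl(u^{2}(R-2\lambda)\bigr)=0$, and $u=0$ on $\partial M$ then forces $R\equiv2\lambda$; this step should be included.)

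The second, more serious gap is the rigidity statement, which is half the theorem and is only sketched. Equality in your chain gives purely boundary information ($\chi(\partial M)=2$, $R=R_{min}$ and $R=6\lambda/(m+2)$ on $\partial M$), and the proposed ``Bochner-type integration by parts applied to the traceless tensor'' is not specified; as it stands it is a plan rather than a proof, and it is exactly the point where an interior identity is indispensable. In the paper rigidity is automatic because the saturated inequality already contains the nonnegative interior term $\frac{2}{m\kappa}\int_{M}u|\mathring{Ric}|^{2}dM$ from Lemma \ref{lemmaRboundary}, so equality forces $\mathring{Ric}\equiv0$ and Proposition 2.4 of \cite{Petersen and Chenxu} identifies $\Bbb{S}^{3}_{+}$. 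Your boundary equalities can in fact be completed along those lines: they give $\mathring{Ric}(N,N)=0$ on $\partial M$, so the boundary term in Lemma \ref{lemma1111} vanishes, and $R\leq3\lambda$ together with $R=R_{min}$ on $\partial M$ yields $\int_{M}\langle\nabla R,\nabla u\rangle dM\geq0$, hence $\int_{M}u|\mathring{Ric}|^{2}dM\leq0$ and $\mathring{Ric}\equiv0$; but some such identity must be stated and proved, since the boundary data alone do not propagate into the interior.
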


Notice that Theorem \ref{th2} implies that the area of the boundary of Example \ref{example} is the maximum possible among all compact $m$-quasi-Einstein manifold with connected boundary and satisfying $R\leq3\lambda.$

Similar to results obtained by Batista {\it et al.} \cite{Batista-Diogenes-Ranieri-Ribeiro JR}, in the second part of this article, we will provide a B\"ochner type formula for $m$-quasi-Einstein manifolds. To be precise, Batista, Di\'{o}genes, Ranieri and Ribeiro \cite{Batista-Diogenes-Ranieri-Ribeiro JR}, mainly motivated by \cite{Lucas Ambrosio}, obtained a B\"ochner type formula for critical metrics of the volume functional. Moreover, they used such a formula to obtain rigidity results for such critical metrics. In \cite{Baltazar-Ribeiro JR}, Baltazar and Ribeiro extended this result for higher dimension and moreover, they provided rigidity results for $V-$static spaces in a more general setting. Recently, Coutinho, Di\'ogenes, Leandro and Ribeiro \cite{CDLR} determined a B\"ocher type formula for Riemannian manifolds that satisfies the condition $f\Rc= \mathring{\nabla^2}f,$ which includes a large class of spaces. 

In the sequel, we going to provide a B\"ochner type formula for $m$-quasi-Einstein manifolds.

\begin{theorem}\label{th3}
Let $\big(M^{n},\,g,\,u,\,\lambda \big)$ be a $m$-quasi-Einstein manifold. Then we have
\begin{eqnarray*}
\frac{1}{2}\div \big(u\nabla|\mathring{Ric}|^{2}\big)&=&u|\nabla\mathring{Ric}|^{2}+\frac{m(n-2)u}{m+n-2}|C|^2+u\langle\nabla^2R,\Rc\rangle+\frac{m+2n-2}{n-1}\Rc(\nabla u,\nabla R)\nonumber\\
	&&+\frac{m-1}{2}\langle\nabla|\Rc|^2,\nabla u\rangle+\frac{2Ru}{n-1}|\Rc|^2+\frac{2nu}{n-2}{\rm trace}(\Rc^3)\nonumber\\
	&&-2uW_{ikjp}\R_{ij}\R_{kp}-\frac{m^2(n-2)}{m+n-2}C_{ijk}W_{ijkl}\nabla_lu\nonumber,
\end{eqnarray*} where ${\rm trace}(\Rc^3)=\R_{ij}\R_{jk}\R_{ki}$ and $\mathring{Ric}=Ric-\frac{R}{n}g.$
\end{theorem}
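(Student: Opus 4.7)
The plan is to reduce the identity to the standard Bochner formula for $|\Rc|^{2}$ and then feed in two structural ingredients specific to the $m$-quasi-Einstein geometry. First, I would observe that
\begin{equation*}
\tfrac{1}{2}\div\!\bigl(u\nabla|\Rc|^{2}\bigr) = \tfrac{u}{2}\Delta|\Rc|^{2} + \tfrac{1}{2}\langle\nabla u,\nabla|\Rc|^{2}\rangle,
\end{equation*}
and apply the classical Bochner identity $\tfrac{1}{2}\Delta|\Rc|^{2} = |\nabla\Rc|^{2} + \Rc_{ij}\Delta\Rc_{ij}$. Since $\Rc$ is trace free, $\Rc_{ij}\Delta\Rc_{ij} = \Rc_{ij}\Delta R_{ij}$, so the problem reduces to computing $u\,\Rc_{ij}\Delta R_{ij}$.

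The first structural input is the Weitzenb\"ock expansion of $\Delta R_{ij}$, obtained from the second Bianchi identity combined with a commutator of covariant derivatives: schematically,
\begin{equation*}
\Delta R_{ij} = \nabla_{i}\nabla_{j}R + 2R_{kilj}R^{kl} - 2R^{k}{}_{i}R_{jk}.
\end{equation*}
Substituting the Weyl decomposition of the Riemann tensor into $R_{kilj}R^{kl}$ and pairing the result with $\Rc^{ij}$ produces precisely the contributions $u\langle\nabla^{2}R,\Rc\rangle$, $-2uW_{ikjp}\R_{ij}\R_{kp}$, $\tfrac{2Ru}{n-1}|\Rc|^{2}$ and $\tfrac{2nu}{n-2}\mathrm{trace}(\Rc^{3})$ appearing on the right-hand side of the statement.

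The second structural input is an identity relating the Cotton tensor to the Weyl tensor and $\nabla u$ on $m$-quasi-Einstein manifolds. Differentiating the fundamental equation $\nabla^{2}u=\tfrac{u}{m}(\mathrm{Ric}-\lambda g)$, antisymmetrizing, and using the Ricci commutation formula $[\nabla_{k},\nabla_{i}]\nabla_{j}u = -R_{kij}{}^{l}\nabla_{l}u$ yields, after decomposing Riemann via Weyl and collecting the pure-Ricci trace parts into $C_{ijk}$, an identity of the schematic form
\begin{equation*}
u\,C_{ijk} = c_{1}\,W_{ijkl}\nabla_{l}u + c_{2}\,\bigl(\nabla_{i}u\,R_{jk}-\nabla_{j}u\,R_{ik}\bigr) + \text{trace terms},
\end{equation*}
with explicit constants $c_{1},c_{2}$ depending on $m,n$. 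Feeding this identity into the divergence-of-Weyl/Ricci expressions that appear when $\nabla_{k}R_{ij}-\nabla_{i}R_{kj}$ is re-expressed via $C$ (together with the standard relation $\nabla^{l}W_{lijk}=-\tfrac{n-3}{n-2}C_{kji}$) is what produces both the $\tfrac{m(n-2)u}{m+n-2}|C|^{2}$ contribution and the mixed $-\tfrac{m^{2}(n-2)}{m+n-2}C_{ijk}W_{ijkl}\nabla_{l}u$ contribution. Repeated applications of the twice-contracted Bianchi identity $\nabla^{j}R_{ij}=\tfrac{1}{2}\nabla_{i}R$, and further appeals to the fundamental equation in order to trade residual $\nabla^{2}u$ factors for $u\cdot\mathrm{Ric}$, then assemble the coefficient $\tfrac{m+2n-2}{n-1}$ in front of $\Rc(\nabla u,\nabla R)$ and shift the coefficient of $\langle\nabla|\Rc|^{2},\nabla u\rangle$ from the naive value $\tfrac{1}{2}$ (coming straight from the divergence on the left) to $\tfrac{m-1}{2}$.

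The main obstacle is bookkeeping rather than a conceptual one: the rational coefficients $\tfrac{m(n-2)}{m+n-2}$ and $\tfrac{m^{2}(n-2)}{m+n-2}$ only emerge after carefully tracking how the prefactor $\tfrac{u}{m}$ from the quasi-Einstein equation combines with the $\tfrac{n-3}{n-2}$ from $\div W$ and the $\tfrac{1}{n-1},\tfrac{1}{n-2}$ factors from the Weyl/Schouten decomposition. I would carry out the entire calculation in an orthonormal frame, isolating the trace-free part of Ricci from the outset and grouping terms by tensorial type ($|\nabla\Rc|^{2}$, cubic Ricci, $W\cdot\Rc^{2}$, $|C|^{2}$, $CW\nabla u$, $\Rc(\nabla u,\nabla R)$, and so on) before collecting at the end.
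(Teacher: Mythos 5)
Your overall scaffolding (expanding $\tfrac12\div(u\nabla|\Rc|^2)$, and the identity $uC_{ijk}=mW_{ijkl}\nabla_lu+\cdots$ obtained by differentiating and antisymmetrizing the fundamental equation) matches the paper's Lemmas 1--4. The genuine gap is your first structural input: the claimed ``Weitzenb\"ock expansion'' $\Delta R_{ij}=\nabla_i\nabla_jR+2R_{kilj}R^{kl}-2R^{k}{}_{i}R_{jk}$ is not a valid identity on a general Riemannian manifold. The second Bianchi identity only determines the divergence part of $\nabla Ric$; the general expansion of $\Delta R_{ij}$ necessarily carries a third-order remainder of divergence-of-Cotton type. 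Indeed, taking $\nabla^k$ of the definition of $C_{kij}$ and commuting derivatives gives, schematically,
\begin{equation*}
\Delta R_{ij}=\nabla^{k}C_{kij}+\frac{n-2}{2(n-1)}\nabla_i\nabla_jR+\frac{1}{2(n-1)}\Delta R\,g_{ij}+Q(Riem,Ric)_{ij},
\end{equation*}
so both the $\nabla^{k}C_{kij}$ term and the Hessian-of-$R$ coefficient differ from what you wrote. This is not mere bookkeeping: after pairing with $u\R_{ij}$, your formula contains no terms involving first derivatives of the Ricci tensor contracted with $\nabla u$, which is precisely the source of the $\tfrac{m(n-2)u}{m+n-2}|C|^2$ and $-\tfrac{m^{2}(n-2)}{m+n-2}C_{ijk}W_{ijkl}\nabla_lu$ contributions and of the $m$-dependent coefficients $\tfrac{m-1}{2}$ and $\tfrac{m+2n-2}{n-1}$; the mechanism you describe for producing them (``re-expressing $\nabla_kR_{ij}-\nabla_iR_{kj}$ via $C$'') has nothing to act on in your scheme. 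Moreover, even the corrected formula is awkward here, because the quasi-Einstein structure controls $uC_{ijk}$, not $C_{ijk}$, so the term $u\R_{ij}\nabla^kC_{kij}$ cannot be converted directly by your second ingredient without a further divergence manipulation.

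The paper avoids computing $\Delta Ric$ from Bianchi altogether. It writes $u\,\Rc=m\,\mathring{\nabla^2}u$ and applies the commutation formula for the Laplacian of a Hessian (its Lemma 5, due to Viaclovsky) to $u$, obtaining Lemma 6 for $u\langle\Delta\Rc,\Rc\rangle$. In that formula the term $(\nabla_iR_{jp}+\nabla_jR_{pi}-\nabla_pR_{ij})\nabla_pu$ supplies exactly the derivative-of-Ricci-against-$\nabla u$ terms, which are then converted into $|C|^2$ and $C_{ijk}W_{ijkl}\nabla_lu$ via the Cotton definition and Lemmas 2--3, while repeated use of $\nabla^2u=\tfrac{u}{m}(Ric-\lambda g)$ and $\Delta u=\tfrac{u}{m}(R-n\lambda)$ is what generates the $m$-dependence of the remaining coefficients. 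If you want to salvage your route, you must either use the correct expansion of $\Delta R_{ij}$ including $\nabla^kC_{kij}$ and then handle that term through an extra divergence identity, or, more simply, trade $\Delta\Rc$ for $\Delta\mathring{\nabla^2}u$ as the paper does.
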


We recall that a Riemannian manifold  $(M^{n},\,g)$ has zero radial Weyl curvature if $W(\cdot,\cdot,\cdot,\nabla u)=0.$ Thus, as a consequence of Theorem \ref{th3} we obtain the following corollary.

\begin{corollary}\label{corWeylZero}
Let $\big(M^{n},\,g,\,u,\,\lambda\big),$ $n\geq 4,$ be a compact $m$-quasi-Einstein manifold with boun\-dary, constant scalar curvature and zero radial Weyl curvature. If $$|\Rc|^2<\frac{R^2}{n(n-1)},$$ then $(M^{n},g)$ is isometric, up to scaling, to the hemisphere $(\Bbb{S}^n_+,g)$ given by Example \ref{example}.
\end{corollary}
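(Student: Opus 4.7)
The plan is to feed the two structural hypotheses into the Böchner-type identity of Theorem \ref{th3} and reduce it, after integration over $M$, to a sum of manifestly nonnegative quantities — so each of them must vanish, and in particular $\Rc\equiv 0$. Since $R$ is constant, $\nabla R=0$ and $\nabla^{2}R=0$, so the terms $u\langle\nabla^{2}R,\Rc\rangle$ and $\frac{m+2n-2}{n-1}\Rc(\nabla u,\nabla R)$ drop out. The zero radial Weyl assumption $W_{ijkl}\nabla_{l}u=0$ kills the $C_{ijk}W_{ijkl}\nabla_{l}u$ term outright. Integrating the reduced identity over $M$, the divergence on the left produces only the boundary integral $\int_{\partial M}u\,\langle\nabla|\Rc|^{2},\nu\rangle\,dS$, which vanishes because $u\equiv 0$ on $\partial M$.

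Next I would handle the drift term $\tfrac{m-1}{2}\langle\nabla|\Rc|^{2},\nabla u\rangle$ by integration by parts, using $\Delta u=\frac{u}{m}(R-n\lambda)$ (the trace of (\ref{eqdef}) under constant $R$); the boundary piece disappears since $u|_{\partial M}=0$, and the bulk piece becomes an $L^{2}$-term in $|\Rc|^{2}$ which I expect to absorb into the positive term $\frac{2Ru}{n-1}|\Rc|^{2}$. The cubic term is controlled by the sharp Okumura-type inequality
\[
\mathrm{trace}(\Rc^{3}) \;\geq\; -\tfrac{n-2}{\sqrt{n(n-1)}}|\Rc|^{3},
\]
so that
\[
\tfrac{2Ru}{n-1}|\Rc|^{2}+\tfrac{2nu}{n-2}\mathrm{trace}(\Rc^{3}) \;\geq\; \tfrac{2u|\Rc|^{2}}{n-1}\bigl(R-\sqrt{n(n-1)}\,|\Rc|\bigr),
\]
which is pointwise nonnegative — strictly so on $\{\Rc\neq 0\}$ — under the strict pinching $|\Rc|^{2}<\frac{R^{2}}{n(n-1)}$.

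The main obstacle is the surviving Weyl--Ricci contraction $-2uW_{ikjp}\R_{ij}\R_{kp}$, which the hypothesis $W_{ijkl}\nabla_{l}u=0$ does not kill algebraically. My intention is to show that, under the standing hypotheses, this term either vanishes or can be absorbed into $|C|^{2}$: differentiating the quasi-Einstein equation (\ref{eqdef}) and applying the commutation formula $\nabla_{l}\nabla_{i}\nabla_{j}u-\nabla_{i}\nabla_{l}\nabla_{j}u=-R_{lijk}\nabla_{k}u$ yields identities linking $\nabla R_{ij}$ to the full Riemann tensor against $\nabla u$; coupling these with the Cotton--Weyl identity $\nabla_{l}W_{ijkl}=\frac{n-3}{n-2}C_{ijk}$ (valid for $n\geq 4$) and the radial Weyl vanishing should produce the needed pointwise cancellation, or at worst a bound $|W_{ikjp}\R_{ij}\R_{kp}|\leq\sqrt{\tfrac{n-2}{2(n-1)}}|W||\Rc|^{2}$ that can be reabsorbed. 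With this step in hand, the integrated identity forces $|\nabla\Rc|\equiv 0$ and $\Rc\equiv 0$, so $(M^{n},g)$ is Einstein; the defining equation then reduces to $\nabla^{2}u=\frac{1}{m}\bigl(\frac{R}{n}-\lambda\bigr)u\,g$ with negative coefficient (since Example \ref{example} and the hypothesis $R\leq n\lambda$ give $\frac{R}{n}<\lambda$), and an Obata-type rigidity — equivalently Theorem 4.1 of \cite{He-Petersen-Wylie2012} — identifies $(M^{n},g)$ up to scaling with the standard hemisphere of Example \ref{example}.
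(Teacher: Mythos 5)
Your reduction of the B\"ochner formula of Theorem \ref{th3} under constant $R$ and zero radial Weyl curvature is the right starting point, but the argument has a genuine gap exactly at the step you yourself flag as the ``main obstacle'': the term $-2uW_{ikjp}\R_{ij}\R_{kp}$ is never actually handled. Your fallback bound $|W_{ikjp}\R_{ij}\R_{kp}|\leq\sqrt{\tfrac{n-2}{2(n-1)}}\,|W|\,|\Rc|^{2}$ cannot be ``reabsorbed'': nothing in the hypotheses controls $|W|$, and the pinching $|\Rc|^{2}<\frac{R^{2}}{n(n-1)}$ gives no information about the Weyl tensor, so the sign argument collapses there. What actually closes this step (and is what the paper does) is to substitute $u\,\Rc=m\,\mathring{\nabla^2}u$ from (\ref{traceless Ricci}) into $uW_{ijkl}\R_{ik}\R_{jl}$, expand by the product rule, and note that radial Weyl flatness kills two of the three resulting terms, while the divergence identity (\ref{cottonwyel}) converts the remaining one into a Cotton--Ricci--$\nabla u$ contraction; Lemma \ref{CWT} with (\ref{T-tensor}) (and $W(\cdot,\cdot,\cdot,\nabla u)=0$, so $uC=T$) then yields the exact identity $uW_{ijkl}\R_{ik}\R_{jl}=\tfrac{m(n-3)u}{2(m+n-2)}|C|^{2}$, which merges with the $\tfrac{m(n-2)u}{m+n-2}|C|^{2}$ term to leave the nonnegative quantity $\tfrac{mu}{m+n-2}|C|^{2}$. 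Without an identity of this kind the conclusion does not follow.

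A secondary but real problem is your treatment of $\tfrac{m-1}{2}\langle\nabla|\Rc|^{2},\nabla u\rangle$. Integrating by parts does not produce a harmless boundary piece: $|\nabla u|=\kappa\neq 0$ along $\partial M$, so the term $\int_{\partial M}|\Rc|^{2}\langle\nabla u,N\rangle\,dS$ survives (only the divergence on the left carries the factor $u$ that kills its boundary contribution), and the sign of what remains depends on the sign of $m-1$, which is not fixed. The paper avoids all of this: since $R$ is constant, Proposition 3.3 of \cite{Petersen and Chenxu} (equivalently Lemma 3.2 of \cite{CaseShuWey}) shows that $|\Rc|^{2}$ is itself constant, so both $\div\big(u\nabla|\Rc|^{2}\big)$ and the drift term vanish identically and the formula becomes a pointwise identity; Okumura's lemma together with the strict pinching, $R>0$ and $u>0$ in the interior then forces $\Rc\equiv 0$, and Proposition 2.4 of \cite{Petersen and Chenxu} identifies $(M^{n},g)$ with the hemisphere --- no integration over $M$ is needed at all. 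Your final step (Einstein plus the classification of \cite{He-Petersen-Wylie2012}) is fine in spirit, but it only becomes available once the two gaps above are filled.
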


Finally, taking into account that the Weyl tensor vanishes in dimension three, we immediately deduce the following result. 

\begin{corollary}\label{cordim3}
Let $\big(M^{3},\,g,\,u,\,\lambda \big)$ be a compact $m$-quasi-Einstein manifold with boundary and constant scalar curvature. Suppose that $$|\Rc|^2<\frac{R^2}{6}.$$ Then $(M^{3},g)$ is isometric, up to scaling, to the hemisphere $(\Bbb{S}^3_+,g)$ given by Example \ref{example}.
\end{corollary}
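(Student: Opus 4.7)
The plan is to specialize the Bochner identity of Theorem \ref{th3} to $n=3$. In dimension three the Weyl tensor vanishes identically, so the two $W$-dependent terms drop out (and the zero radial Weyl hypothesis needed in Corollary \ref{corWeylZero} becomes automatic), while the constant scalar curvature assumption kills the $\nabla R$ and $\nabla^{2}R$ contributions. The identity of Theorem \ref{th3} then collapses to
\begin{equation*}
\tfrac{1}{2}\div\bigl(u\nabla|\Rc|^{2}\bigr)=u|\nabla\Rc|^{2}+\tfrac{mu}{m+1}|C|^{2}+\tfrac{m-1}{2}\langle\nabla|\Rc|^{2},\nabla u\rangle+Ru|\Rc|^{2}+6u\,\mathrm{trace}(\Rc^{3}).
\end{equation*}

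The key algebraic input is the sharp pointwise bound $\mathrm{trace}(\Rc^{3})\geq -|\Rc|^{3}/\sqrt{6}$, valid for any traceless symmetric $2$-tensor in dimension three; this estimates the last two terms of the identity by $u|\Rc|^{2}\bigl(R-\sqrt{6}\,|\Rc|\bigr)$, which is pointwise strictly positive wherever $|\Rc|\neq 0$ thanks to the pinching hypothesis $|\Rc|^{2}<R^{2}/6$.

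To handle the indefinite drift term I would rewrite $\tfrac{m-1}{2}\langle\nabla|\Rc|^{2},\nabla u\rangle=\tfrac{m-1}{2}\div(|\Rc|^{2}\nabla u)-\tfrac{m-1}{2}|\Rc|^{2}\Delta u$ and substitute the trace $\Delta u=u(R-3\lambda)/m$ of Definition \ref{def}; the inequality $R\leq 3\lambda$ recorded in the paper for the constant scalar curvature case makes the bulk contribution $-\tfrac{m-1}{2}|\Rc|^{2}\Delta u$ have a favorable sign. Integrating the resulting divergence identity over $M$ and using $u\equiv 0$ on $\partial M$ to annihilate the $u\nabla|\Rc|^{2}$ boundary contribution, one is left with a boundary integral of $|\Rc|^{2}\langle\nabla u,\nu\rangle$, whose sign is controlled by $\langle\nabla u,\nu\rangle\leq 0$ on $\partial M$. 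Pointwise nonnegativity of every bulk integrand then forces $|\Rc|^{2}(R-\sqrt{6}\,|\Rc|)\equiv 0$ on $\mathrm{int}(M)$, and the strict pinching implies $\Rc\equiv 0$ throughout $M$.

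Once $M^{3}$ is Einstein, it has constant positive sectional curvature $R/6$; combined with the quasi-Einstein structure and the boundary conditions (Example \ref{example} providing the model), this identifies $(M^{3},g)$, up to scaling, with the hemisphere $(\Bbb{S}^{3}_{+},g)$. The principal obstacle is coordinating the signs of the bulk and boundary contributions coming from the drift term across all admissible $m$; making the analysis go through in the less favorable regime $0<m<1$ is the delicate point, and is likely where the totally geodesic nature of $\partial M$ (which follows from $\nabla^{2}u|_{\partial M}=0$) must be used to control the leftover boundary integral.
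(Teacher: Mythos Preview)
Your specialization of Theorem~\ref{th3} to $n=3$ is correct, as is the use of Okumura's inequality to estimate $\mathrm{trace}(\Rc^{3})$. The gap is in your treatment of the drift term $\tfrac{m-1}{2}\langle\nabla|\Rc|^{2},\nabla u\rangle$. Your integration-by-parts produces \emph{two} contributions with opposite tendencies: the boundary integral $\tfrac{m-1}{2}\int_{\partial M}|\Rc|^{2}\langle\nabla u,N\rangle\,dS$ and the bulk term $-\tfrac{m-1}{2}\int_{M}|\Rc|^{2}\Delta u\,dM$. Since $\langle\nabla u,N\rangle=-\kappa<0$ and $\Delta u\le 0$, for $m>1$ the boundary piece is $\le 0$ while the bulk piece is $\ge 0$, and for $0<m<1$ the signs swap. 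In either regime one of the two has the wrong sign, so the integrated identity does not yield the desired nonnegativity. The difficulty you flag for $0<m<1$ is thus present, in mirror form, for $m>1$ as well; the totally geodesic condition on $\partial M$ does not fix this, because it gives no control on $|\Rc|^{2}$ along $\partial M$.

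The missing ingredient, and the route the paper takes, is that on an $m$-quasi-Einstein manifold with constant scalar curvature one has the explicit formula
\[
|\Rc|^{2}=-\frac{m+n-1}{n(m-1)}(R-n\lambda)\Big(R-\frac{n(n-1)}{m+n-1}\lambda\Big),
\]
(Proposition~3.3 of \cite{Petersen and Chenxu}, or Lemma~3.2 of \cite{CaseShuWey}), so $|\Rc|^{2}$ is itself constant. Then $\nabla|\Rc|^{2}\equiv 0$, which kills \emph{both} the left-hand side $\tfrac12\div(u\nabla|\Rc|^{2})$ and the drift term pointwise. The Bochner identity collapses to the pointwise equality
\[
0=u|\nabla\Rc|^{2}+\frac{mu}{m+1}|C|^{2}+u|\Rc|^{2}\bigl(R-\sqrt{6}\,|\Rc|\bigr)+\big(\text{nonnegative Okumura remainder}\big),
\]
and the strict pinching $|\Rc|<R/\sqrt{6}$ forces $\Rc\equiv 0$ on $\mathrm{int}(M)$, hence on $M$. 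No integration, no boundary terms, and no case split on $m$ are needed. Your final step (Einstein $\Rightarrow$ hemisphere via Proposition~2.4 of \cite{Petersen and Chenxu}) is fine.
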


\section{Preliminaries}
\label{preliminares}

In this section, we will present some key lemmas that will be useful for the establishment of the desired results. Firstly, we recall that the fundamental equation of a $m$-quasi-Einstein manifold $(M^{n},g,u)$ (possibly with boundary) is given by 
\begin{equation}\label{fundamental equation}
\nabla ^{2}u = \dfrac{u}{m}(Ric-\lambda g),
\end{equation} where $u>0$ in interior of $M$ and $u=0$ on $\partial M.$ In particular, taking the trace of (\ref{fundamental equation}) we arrive at
\begin{equation}\label{laplaciano}
\Delta u = \frac{u}{m}(R-\lambda n).
\end{equation} This jointly with (\ref{fundamental equation}) gives
\begin{equation}\label{traceless Ricci}
u\, \mathring{Ric} = m \mathring{\nabla^2}\, u,
\end{equation} where $\mathring{T}=T-\dfrac{{\rm tr}T}{n}g$ stands the traceless part of $T.$

It is important to remember that the Weyl tensor $W$ is defined by the following decomposition formula
\begin{eqnarray}\label{weyl tensor}
R_{ijkl}&=&W_{ijkl}+\dfrac{1}{n-2}\big (R_{ik}g_{jl}+R_{jl}g_{ik}-R_{il}g_{jk}-R_{jk}g_{il}\big) \nonumber \\ 
&&-\dfrac{R}{(n-1)(n-2)}\big (g_{jl}g_{ik}-g_{il}g_{jk}\big ),
\end{eqnarray} where $R_{ijkl}$ denotes the Riemann curvature tensor whereas the Cotton tensor is defined by
\begin{eqnarray}\label{cotton tensor}
C_{ijk}=\nabla_{i}R_{jk}-\nabla_{j}R_{ik}-\dfrac{1}{2(n-1)}\big(\nabla_{i}Rg_{jk}-\nabla_{j}Rg_{ik}\big).
\end{eqnarray} An important relation between Weyl and Cotton tensors is given by
\begin{equation}\label{cottonwyel}
\displaystyle{C_{ijk}=-\frac{(n-2)}{(n-3)}\nabla_{l}W_{ijkl},}
\end{equation} for $n\ge 4.$ Moreover, it is easy to see that
\begin{equation}\label{simetC}
C_{ijk}=-C_{jik} \,\,\,\,\,\,\hbox{and}\,\,\,\,\,\,C_{ijk}+C_{jki}+C_{kij}=0.
\end{equation} In particular, we have
\begin{equation}\label{freetraceC}
g^{ij}C_{ijk}=g^{ik}C_{ijk}=g^{jk}C_{ijk}=0.
\end{equation}

Now we going to present some lemmas that play crucial role in this paper. In our first lemma we follow the ideas of Lemma 1 of \cite{Bach-Flat} for critical metrics of the volume functional. This lemma was also proved by He, Petersen and Wylie with a different notation (see Proposition 6.2 of \cite{He-Petersen-Wylie2012}).

\begin{lemma}\label{lemmaDifference}
Let $\big(M^{n},g,u,\lambda\big)$ be a $m$-quasi-Einstein manifold. Then we have
\begin{eqnarray*}
u\big ( \nabla_{i} R_{jk}-\nabla_{j} R_{ik}\big)=mR_{ijkl}\nabla_{l}u+\lambda \big (\nabla _{i}ug_{jk}-\nabla_{j}ug_{ik}) -\big (\nabla_{i} u R_{jk}-\nabla_{j} u R_{ik}).
\end{eqnarray*}
\end{lemma}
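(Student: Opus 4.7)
The plan is to derive this commutator identity by taking one further covariant derivative of the fundamental equation \eqref{fundamental equation} and then antisymmetrizing in a pair of indices; the commutator of third-order derivatives that appears will produce the Riemann tensor via the Ricci identity, while the remaining terms are precisely the correction terms on the right-hand side of the lemma.

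Concretely, I would first rewrite \eqref{fundamental equation} in index form as $uR_{jk}=m\nabla_j\nabla_k u+\lambda u\,g_{jk}$ and apply $\nabla_i$ to both sides. Using the product rule on $uR_{jk}$ and rearranging gives
\begin{equation*}
u\nabla_i R_{jk}=m\nabla_i\nabla_j\nabla_k u+\lambda\nabla_i u\,g_{jk}-\nabla_i u\,R_{jk}.
\end{equation*}
Interchanging the roles of $i$ and $j$ and subtracting produces
\begin{equation*}
u\bigl(\nabla_i R_{jk}-\nabla_j R_{ik}\bigr)=m\bigl(\nabla_i\nabla_j-\nabla_j\nabla_i\bigr)\nabla_k u+\lambda\bigl(\nabla_i u\,g_{jk}-\nabla_j u\,g_{ik}\bigr)-\bigl(\nabla_i u\,R_{jk}-\nabla_j u\,R_{ik}\bigr),
\end{equation*}
which already matches the structure of the claimed identity outside the first term on the right.

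It then remains to invoke the Ricci commutation formula applied to the one-form $du$, namely $(\nabla_i\nabla_j-\nabla_j\nabla_i)\nabla_k u=R_{ijkl}\nabla_l u$ in the sign convention used in \eqref{weyl tensor}. Substituting this into the previous display gives the stated lemma verbatim. The only delicate point is keeping the sign convention for the Riemann tensor consistent with the one implicit in \eqref{weyl tensor}, otherwise the sign of the $mR_{ijkl}\nabla_l u$ term would flip. Beyond this bookkeeping, the argument is essentially a single differentiation of the defining equation, so no substantial obstacle is expected.
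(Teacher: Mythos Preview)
Your proposal is correct and follows essentially the same approach as the paper's own proof: both differentiate the fundamental equation \eqref{fundamental equation} once, antisymmetrize in the first two indices, and apply the Ricci identity $(\nabla_i\nabla_j-\nabla_j\nabla_i)\nabla_k u=R_{ijkl}\nabla_l u$ to produce the curvature term. The only cosmetic difference is that the paper writes the coordinate form as $m\nabla_j\nabla_k u=uR_{jk}-\lambda u g_{jk}$ before differentiating, but the algebra is identical.
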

\begin{proof}
Rewriting the fundamental equation (\ref{fundamental equation}) in coordinates we get
\begin{eqnarray*}
m\nabla_{j}\nabla _{k}u=uR_{jk}-\lambda u g_{jk}.
\end{eqnarray*} From this we deduce
\begin{eqnarray*}
m\nabla_{i} \nabla_{j} \nabla_{k} u = \nabla_{i} u R_{jk}+u\nabla_{i}R_{jk}-\lambda \nabla_{i} u g_{jk},
\end{eqnarray*} which can rewrite succinctly as
\begin{eqnarray}\label{lemmaDifferenceeq1}
u\nabla_{i}R_{jk}=m\nabla_{i}\nabla_{j}\nabla_{k}u-\nabla_{i}uR_{jk}+\lambda \nabla_{i}u g_{jk}.
\end{eqnarray}

Similarly, it is not difficult to show that
\begin{eqnarray}\label{lemmaDifferenceeq2}
u\nabla_{j}R_{ik}=m\nabla_{j}\nabla_{i}\nabla_{k}u-\nabla_{j}uR_{ik}+\lambda\nabla_{j}ug_{ik}.
\end{eqnarray} Now we combine (\ref{lemmaDifferenceeq1}) and (\ref{lemmaDifferenceeq2}) jointly with Ricci identity in order to deduce
\begin{eqnarray*}
u\big( \nabla_{i} R_{jk}-\nabla_{j}R_{ik}\big)=mR_{ijkl}\nabla_{l}u+\lambda \big(\nabla_{i}ug_{jk}-\nabla_{j}ug_{ik}\big)-\big(\nabla_{i}uR_{jk}-\nabla_{j}uR_{ik}\big),
\end{eqnarray*} which finishes the proof of the lemma. 
\end{proof}

Before to state our next lemma, we introduce the $T$-tensor for $m$-quasi-Einstein manifolds. It was defined previously in \cite{Ranieri_Ernani}. 

\begin{eqnarray}\label{T-tensor}
T_{ijk}&=&\frac{m+n-2}{n-2}(R_{ik}\nabla_ju-R_{jk}\nabla_iu)+\frac{m}{n-2}(R_{jl}\nabla_lug_{ik}-R_{il}\nabla_lug_{jk})\nonumber\\
 &&+\frac{(n-1)(n-2)\lambda+mR}{(n-1)(n-2)}(\nabla_iug_{jk}-\nabla_jug_{ik})-\frac{u}{2(n-1)}(\nabla_iRg_{jk}-\nabla_jRg_{ik}).
\end{eqnarray}

With this notation we have the following lemma.

\begin{lemma}\label{CWT}
Let $\big(M^{n},g,u,\lambda\big)$ be a $m$-quasi-Einstein manifold. Then
\begin{equation*}
uC_{ijk}=mW_{ijkl}\nabla_lu+T_{ijk},
\end{equation*}
where $T$ is given by (\ref{T-tensor}).
\end{lemma}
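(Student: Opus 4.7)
The plan is to run the definition of the Cotton tensor through Lemma \ref{lemmaDifference} and then use the Weyl decomposition \eqref{weyl tensor} to isolate the term $mW_{ijkl}\nabla_l u$; everything else should reorganize into the components of $T_{ijk}$. So the work is essentially combinatorial bookkeeping with the indices.

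More concretely, the first step is to multiply the definition \eqref{cotton tensor} of $C_{ijk}$ by $u$, which gives
\begin{equation*}
uC_{ijk}=u\bigl(\nabla_iR_{jk}-\nabla_jR_{ik}\bigr)-\frac{u}{2(n-1)}\bigl(\nabla_iR\,g_{jk}-\nabla_jR\,g_{ik}\bigr).
\end{equation*}
The last piece is already exactly the fourth line of the expression \eqref{T-tensor} defining $T_{ijk}$, so I would set it aside. For the first piece I would substitute Lemma \ref{lemmaDifference} to convert the difference of covariant derivatives of $Ric$ into an expression involving $R_{ijkl}\nabla_l u$, the Ricci tensor contracted with $\nabla u$, and the factor $\lambda$.

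Next I would eliminate the Riemann tensor. Contracting the decomposition \eqref{weyl tensor} with $\nabla_l u$ yields
\begin{equation*}
R_{ijkl}\nabla_l u=W_{ijkl}\nabla_l u+\frac{1}{n-2}\bigl(R_{ik}\nabla_j u-R_{jk}\nabla_i u+R_{jl}\nabla_l u\,g_{ik}-R_{il}\nabla_l u\,g_{jk}\bigr)-\frac{R}{(n-1)(n-2)}\bigl(\nabla_j u\,g_{ik}-\nabla_i u\,g_{jk}\bigr).
\end{equation*}
Multiplying by $m$ and adding the $\lambda$ and $Ric$ terms produced by Lemma \ref{lemmaDifference}, the coefficients should then be added line by line: the terms $R_{ik}\nabla_j u-R_{jk}\nabla_i u$ appear with coefficient $\tfrac{m}{n-2}$ from the Weyl expansion and with coefficient $1$ from the $-(\nabla_i u R_{jk}-\nabla_j u R_{ik})$ piece of Lemma \ref{lemmaDifference}, for a total of $\tfrac{m+n-2}{n-2}$; the terms of the form $R_{jl}\nabla_l u\,g_{ik}-R_{il}\nabla_l u\,g_{jk}$ only come from the Weyl expansion and retain the coefficient $\tfrac{m}{n-2}$; finally the terms of the form $\nabla_i u\,g_{jk}-\nabla_j u\,g_{ik}$ combine $\lambda$ with $\tfrac{mR}{(n-1)(n-2)}$ to produce exactly $\tfrac{(n-1)(n-2)\lambda+mR}{(n-1)(n-2)}$.

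Comparing with \eqref{T-tensor}, the four blocks line up exactly, leaving $uC_{ijk}=mW_{ijkl}\nabla_l u+T_{ijk}$. I do not expect any genuine obstacle here; the only thing that has to be handled carefully is the sign from the antisymmetric Riemann terms $g_{jl}g_{ik}-g_{il}g_{jk}$ when contracted with $\nabla_l u$, which flips the order of $\nabla_i u\,g_{jk}$ and $\nabla_j u\,g_{ik}$ relative to what Lemma \ref{lemmaDifference} produces. Once that sign is tracked correctly the identity falls out immediately.
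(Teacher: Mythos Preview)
Your proposal is correct and follows essentially the same route as the paper: multiply the Cotton tensor by $u$, apply Lemma~\ref{lemmaDifference} to replace $u(\nabla_iR_{jk}-\nabla_jR_{ik})$, then use the Weyl decomposition~\eqref{weyl tensor} to extract $mW_{ijkl}\nabla_l u$ and regroup the remaining terms into $T_{ijk}$. The paper carries out the same coefficient bookkeeping you outline, with no additional ideas.
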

\begin{proof}
Combining Eq. (\ref{cotton tensor}) with Lemma \ref{lemmaDifference} we obtain
\begin{eqnarray}\label{lemmaCWTeq1}
uC_{ijk}&=&u\big(\nabla_{i}R_{jk}-\nabla_{j}R_{ik}\big)-\frac{u}{2(n-1)}(\nabla_iRg_{jk}-\nabla_jRg_{ik})\nonumber\\
 &=&mR_{ijkl}\nabla_{l}u+\lambda \big (\nabla _{i}ug_{jk}-\nabla_{j}ug_{ik})-\big (\nabla_{i} u R_{jk}-\nabla_{j} u R_{ik})\\
 &&-\frac{u}{2(n-1)}(\nabla_iRg_{jk}-\nabla_jRg_{ik}).\nonumber
\end{eqnarray} Substituting (\ref{weyl tensor}) into (\ref{lemmaCWTeq1}) we get
\begin{eqnarray*}
uC_{ijk}&=&mW_{ijkl}\nabla_lu+\frac{m}{n-2}\big(R_{ik}g_{jl}+R_{jl}g_{ik}-R_{il}g_{jk}-R_{jk}g_{il}\big)\nabla_{l}u\nonumber\\
 &&-\frac{mR}{(n-1)(n-2)}\big(g_{jl}g_{ik}-g_{il}g_{jk}\big)\nabla_{l}u+\lambda\big(\nabla_{i}ug_{jk}-\nabla_{j}ug_{ik}\big)\nonumber\\
 &&-\big(\nabla_{i}u R_{jk}-\nabla_{j}uR_{ik}\big)-\frac{u}{2(n-1)}(\nabla_iRg_{jk}-\nabla_jRg_{ik})\nonumber\\
 &=&mW_{ijkl}\nabla_lu+\frac{m}{n-2}\big (R_{ik}\nabla_{j}u-R_{jk}\nabla_{i}u \big )+\frac{m}{n-2}\big(R_{jl}\nabla_lug_{ik}-R_{il}\nabla_lug_{jk}\big)\nonumber\\
 &&-\frac{mR}{(n-1)(n-2)}\big(g_{ik}\nabla_{j}u-g_{jk}\nabla_{i}u \big)+\lambda \big(\nabla_{i}u g_{jk}-\nabla_{j} u g_{ik}\big)\nonumber\\
 &&-\big(\nabla_{i} u R_{jk}-\nabla_{j}uR_{ik}\big)-\frac{u}{2(n-1)}(\nabla_iRg_{jk}-\nabla_jRg_{ik})\nonumber\\
 &=&mW_{ijkl}\nabla_lu+\frac{m+n-2}{n-2}(R_{ik}\nabla_{j}u-R_{jk}\nabla_{i}u\big)+\frac{m}{n-2}\big(R_{jl}\nabla_{l}ug_{ik}-R_{il}\nabla_{l}ug_{jk}\big)\nonumber\\
 &&+\frac{(n-1)(n-2)\lambda+mR}{(n-1)(n-2)}\big(\nabla_{i}u g_{jk}-\nabla_{j}u g_{ik}\big)-\frac{u}{2(n-1)}(\nabla_iRg_{jk}-\nabla_jRg_{ik})\\
 &=&mW_{ijkl}\nabla_lu+T_{ijk}.
\end{eqnarray*}This finishes the proof of the lemma.
\end{proof}

It should be point out that the tensor  $T$ has the same properties of the Cotton tensor, i.e., (\ref{simetC}) and (\ref{freetraceC}). Next, we going to compute the norm of the Cotton tensor. 

\begin{lemma}\label{lemmaCottonNorm}
Let $\big(M^{n},g,u,\lambda\big)$ be a $m$-quasi-Einstein manifold. Then we have
\begin{equation*}
u|C|^{2}=-\frac{2(m+n-2)}{n-2}C_{ijk}\mathring{R}_{jk}\nabla_{i}u+mC_{ijk}W_{ijkl}\nabla_lu.
\end{equation*}
\end{lemma}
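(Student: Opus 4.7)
The plan is to take the identity from Lemma \ref{CWT}, namely
$$uC_{ijk} = mW_{ijkl}\nabla_l u + T_{ijk},$$
and contract both sides with $C_{ijk}$. The left-hand side immediately becomes $u|C|^2$, while the right-hand side splits into the Weyl contribution $mC_{ijk}W_{ijkl}\nabla_l u$ (which already appears in the target formula) and the term $C_{ijk}T_{ijk}$. So the whole task reduces to showing
$$C_{ijk}T_{ijk} = -\frac{2(m+n-2)}{n-2}C_{ijk}\mathring{R}_{jk}\nabla_i u.$$

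To compute $C_{ijk}T_{ijk}$ I would look term by term at the four pieces of $T$ given by \eqref{T-tensor}. The second, third and fourth pieces all contain one of the factors $g_{ik}$ or $g_{jk}$, and these are killed on contraction with $C_{ijk}$ because of the trace-free identities \eqref{freetraceC}. Only the first piece survives, giving
$$C_{ijk}T_{ijk} = \frac{m+n-2}{n-2}C_{ijk}\bigl(R_{ik}\nabla_j u - R_{jk}\nabla_i u\bigr).$$
Now I use the antisymmetry $C_{ijk}=-C_{jik}$ from \eqref{simetC}: relabeling $i\leftrightarrow j$ in $C_{ijk}R_{ik}\nabla_j u$ shows that this equals $-C_{ijk}R_{jk}\nabla_i u$, so the two summands combine into $-2C_{ijk}R_{jk}\nabla_i u$. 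Finally, writing $R_{jk}=\mathring{R}_{jk}+\tfrac{R}{n}g_{jk}$ and using $C_{ijk}g_{jk}=0$ replaces $R_{jk}$ by $\mathring{R}_{jk}$, yielding exactly the desired expression.

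I do not foresee any genuine obstacle here; the proof is essentially bookkeeping once Lemma \ref{CWT} is in hand. The only thing to be careful about is making the two symmetry reductions (the trace-free contractions against the $g$-terms, and the antisymmetry argument for the first piece) in the right order, and remembering that $\mathring{R}_{jk}$ may replace $R_{jk}$ against $C_{ijk}$ for free. Adding back the Weyl term from Lemma \ref{CWT} then gives the stated identity.
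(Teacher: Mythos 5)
Your proposal is correct and is essentially the paper's own argument, just run in the opposite direction: the paper starts from $-C_{ijk}\mathring{R}_{jk}\nabla_i u$, uses the antisymmetry and trace-free properties of $C$ to identify it with $\frac{n-2}{2(m+n-2)}C_{ijk}T_{ijk}$, and then substitutes $T_{ijk}=uC_{ijk}-mW_{ijkl}\nabla_l u$ from Lemma \ref{CWT}, whereas you contract Lemma \ref{CWT} with $C_{ijk}$ first and then evaluate $C_{ijk}T_{ijk}$ by the same two symmetry reductions. The ingredients and bookkeeping are identical, so no further comment is needed.
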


\begin{proof} Initially, it follows by (\ref{simetC}) and (\ref{freetraceC}) that
\begin{eqnarray*}
-C_{ijk}\mathring{R}_{jk}\nabla_{i}u&=&-C_{ijk}R_{jk}\nabla_iu\nonumber\\
&=&-\dfrac{1}{2}\big(C_{ijk}-C_{jik}\big)R_{jk}\nabla_{i}u\nonumber\\
 &=&-\dfrac{1}{2}C_{ijk}R_{jk}\nabla_{i}u+\dfrac{1}{2}C_{ijk}R_{ik}\nabla_{j}u\nonumber\\
 &=&\dfrac{1}{2}C_{ijk}\big(R_{ik}\nabla_{j}u-R_{jk}\nabla_{i}u\big).
\end{eqnarray*}
This combined with (\ref{T-tensor}) provides
\begin{eqnarray}\label{lemmaCottonNormeq3}
-C_{ijk}\mathring{R_{jk}}\nabla_{i}u&=&\frac{n-2}{2(m+n-2)}C_{ijk}T_{ijk},
\end{eqnarray} where we used again the fact that the Cotton tensor is trace-free. Thus, by combining Lemma \ref{CWT} with (\ref{lemmaCottonNormeq3}) we arrive at
\begin{eqnarray*}
-C_{ijk}\mathring{R_{jk}}\nabla_{i}u&=&\frac{n-2}{2(m+n-2)}C_{ijk}\left(uC_{ijk}-mW_{ijkl}\nabla_lu\right)\\
	&=&\frac{n-2}{2(m+n-2)}\left(u|C|^{2}-mC_{ijk}W_{ijkl}\nabla_lu\right).
\end{eqnarray*} Consequently,
\begin{equation*}
u|C|^2=-\frac{2(m+n-2)}{n-2}C_{ijk}\mathring{R}_{jk}\nabla_iu+mC_{ijk}W_{ijkl}\nabla_lu,
\end{equation*} as we wanted to prove.
\end{proof}

Our next lemma provides a commutator formula for the Laplacian and Hessian acting on functions $h \in C^{4}(M).$ A detailed proof can be found in \cite[Proposition 7.1]{Viaclovsky} (see also \cite{Batista-Diogenes-Ranieri-Ribeiro JR}).

\begin{lemma}\label{lemma viackoviski}
Let $(M^n,\,g)$ be a Riamannian manifold and $h \in C^{4}(M).$ Then we have
\begin{eqnarray*}
(\Delta \nabla ^{2}h)_{ij}&=&\nabla ^{2}_{ij}\Delta h+(R_{jp}g_{ik}+R_{ip}g_{jk}-2R_{ikjp})\nabla_{k}\nabla_{p}h \\
	&+& (\nabla_{i}R_{jp}+\nabla_{j}R_{pi}-\nabla _{p}R_{ij})\nabla_{p}h,
\end{eqnarray*} where $\nabla ^{2}$ represents the Hessian.
\end{lemma}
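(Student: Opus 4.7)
The plan is to rewrite $(\Delta\nabla^{2}h)_{ij}=g^{kl}\nabla_{k}\nabla_{l}\nabla_{i}\nabla_{j}h$ by commuting the four covariant derivatives one pair at a time, so that the order becomes $\nabla_{i}\nabla_{j}\nabla_{k}\nabla_{l}h$, and then tracing with $g^{kl}$ to obtain $\nabla^{2}_{ij}\Delta h$ plus curvature correction terms.

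First, since $h$ is a scalar I use $\nabla_{l}\nabla_{j}h=\nabla_{j}\nabla_{l}h$ and apply the Ricci identity to the one-form $V=\nabla h$ to get
\[
\nabla_{l}\nabla_{i}\nabla_{j}h=\nabla_{i}\nabla_{j}\nabla_{l}h-R_{lij}{}^{p}\nabla_{p}h.
\]
Next, I differentiate with $\nabla_{k}$, then commute $\nabla_{k}$ past $\nabla_{i}$ (treating $\nabla_{j}\nabla_{l}h$ as a $(0,2)$-tensor, which contributes two Riemann pieces via $[\nabla_a,\nabla_b]T_{jl}=-R_{abj}{}^{p}T_{pl}-R_{abl}{}^{p}T_{jp}$) and afterwards past $\nabla_{j}$ (one more Riemann piece on the one-form $\nabla_l h$). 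This yields
\begin{align*}
\nabla_{k}\nabla_{l}\nabla_{i}\nabla_{j}h&=\nabla_{i}\nabla_{j}\nabla_{k}\nabla_{l}h-(\nabla_{k}R_{lij}{}^{p})\nabla_{p}h-(\nabla_{i}R_{kjl}{}^{p})\nabla_{p}h\\
&\quad-R_{lij}{}^{p}\nabla_{k}\nabla_{p}h-R_{kjl}{}^{p}\nabla_{i}\nabla_{p}h-R_{kij}{}^{p}\nabla_{p}\nabla_{l}h-R_{kil}{}^{p}\nabla_{j}\nabla_{p}h.
\end{align*}

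After tracing with $g^{kl}$, the first term yields $\nabla^{2}_{ij}\Delta h$; the four Riemann-times-Hessian contractions collapse, via the standard symmetries of the curvature tensor together with the definition $R_{jp}=g^{kl}R_{kjlp}$, precisely to $(R_{jp}g_{ik}+R_{ip}g_{jk}-2R_{ikjp})\nabla_{k}\nabla_{p}h$ (the factor $2$ arises because two of the contractions produce identical $R_{ikjp}$ pieces after using the pair symmetry of Riemann). For the two curvature-derivative terms, I invoke the contracted second Bianchi identity $g^{kl}\nabla_{k}R_{lijp}=\nabla_{j}R_{ip}-\nabla_{i}R_{jp}$ on $g^{kl}(\nabla_{k}R_{lij}{}^{p})\nabla_{p}h$, while the remaining term is rewritten as $g^{kl}(\nabla_{i}R_{kjl}{}^{p})\nabla_{p}h=\nabla_{i}R_{j}{}^{p}\nabla_{p}h$. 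Assembling and using the Bianchi identity once more produces exactly the displayed combination $(\nabla_{i}R_{jp}+\nabla_{j}R_{ip}-\nabla_{p}R_{ij})\nabla_{p}h$.

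The main obstacle is pure bookkeeping: keeping track of signs and identifying which Riemann index slot is being contracted at each application of the Ricci identity, then recognizing the specific Bianchi combination among the derivative-of-curvature terms. No genuinely new geometric ingredient will be needed beyond the commutator formula for covariant derivatives on tensors of rank $\leq 2$ and the (twice-)contracted second Bianchi identity; the argument is the standard Weitzenb\"ock-type computation for the Hessian of a scalar function, which is why the paper simply cites \cite{Viaclovsky}.
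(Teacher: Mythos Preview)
Your proposal is correct: the strategy of commuting the four covariant derivatives in $g^{kl}\nabla_k\nabla_l\nabla_i\nabla_j h$ step by step (Ricci identity on a one-form, then on a $(0,2)$-tensor, then on a one-form again), tracing, and closing with the contracted second Bianchi identity is exactly the standard Weitzenb\"ock computation, and your bookkeeping outline is accurate. The paper itself does not supply a proof at all---it simply cites \cite[Proposition~7.1]{Viaclovsky}---so your sketch in fact provides more detail than the paper does; there is nothing to compare beyond noting that your argument is presumably what one would find in the cited lecture notes.
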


We shall use Lemma \ref{lemma viackoviski} to obtain the following result that will be used in the proof of Theorem \ref{th3}.

\begin{lemma}\label{lemmaLaplacianoRic}
Let $\big(M^{n},g,u,\lambda\big)$ be a $m$-quasi-Einstein manifold. Then it holds
\begin{eqnarray*}
u\langle\Delta\Rc,\Rc\rangle&=&\frac{2Ru}{n}|\Rc|^2+u\langle\nabla^2R,\Rc\rangle+\frac{2(n+m)}{n}\Rc(\nabla u,\nabla R)+2m\nabla_{i}\R_{jp}\R_{ij}\nabla_{p}u\nonumber\\
	&&+2u\left(\R_{ip}\R_{kp}\R_{ik}-R_{ikjp}\R_{ij}\R_{kp}\right)-\frac{m+2}{2}\langle\nabla|\Rc|^2,\nabla u\rangle.
\end{eqnarray*}
\end{lemma}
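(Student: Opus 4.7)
The plan is to start from the trace-free form of the fundamental equation, $u\Rc=m\,\mathring{\nabla^2}\, u$, as in Eq.~(\ref{traceless Ricci}), apply the Laplacian to both sides, and then contract with $\R_{ij}$. On the left, the product rule gives
$$\Delta(u\R_{ij})=u\Delta\R_{ij}+2\nabla_k u\,\nabla_k\R_{ij}+(\Delta u)\R_{ij},$$
so isolating $u\Delta\R_{ij}$ and then contracting with $\R_{ij}$ produces two auxiliary contributions: $-\langle\nabla u,\nabla|\Rc|^2\rangle$ (since $\R_{ij}\nabla_k\R_{ij}=\tfrac12\nabla_k|\Rc|^2$) and, via Eq.~(\ref{laplaciano}), $-\tfrac{u}{m}(R-n\lambda)|\Rc|^2$.

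On the right I would write $\Delta(\mathring{\nabla^2}\, u)=\Delta\nabla^2 u-\tfrac{\Delta^2 u}{n}g$ and apply Lemma~\ref{lemma viackoviski} with $h=u$. The $g_{ij}$ piece dies against $\R_{ij}$ by tracelessness, leaving three groups to contract with $\R_{ij}$: (i) $m\nabla_i\nabla_j\Delta u$; (ii) the curvature piece $m(R_{jp}g_{ik}+R_{ip}g_{jk}-2R_{ikjp})\nabla_k\nabla_p u$; (iii) the Ricci-derivative piece $m(\nabla_iR_{jp}+\nabla_jR_{pi}-\nabla_pR_{ij})\nabla_p u$. For (i), differentiating $m\Delta u=u(R-n\lambda)$ twice, contracting with $\R_{ij}$, and using the identity $\R_{ij}\nabla_i\nabla_j u=\tfrac{u}{m}|\Rc|^2$ (immediate from Eq.~(\ref{fundamental equation}) and the tracelessness of $\Rc$), yields $\tfrac{u}{m}(R-n\lambda)|\Rc|^2+2\,\Rc(\nabla u,\nabla R)+u\langle\nabla^2R,\Rc\rangle$. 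For (ii), I substitute $\nabla_k\nabla_p u=\tfrac{u}{m}(R_{kp}-\lambda g_{kp})$, decompose $R_{kp}=\R_{kp}+\tfrac{R}{n}g_{kp}$, and use $R_{ikjp}g_{kp}=R_{ij}$; the $\lambda$-contributions cancel between the first two terms and the Riemann term, producing $2u\,\R_{ip}\R_{kp}\R_{ik}+\tfrac{2Ru}{n}|\Rc|^2-2uR_{ikjp}\R_{ij}\R_{kp}$. For (iii), the symmetry of $\R_{ij}$ makes the first two derivatives contribute equally, and splitting $\nabla_iR_{jp}=\nabla_i\R_{jp}+\tfrac{\nabla_iR}{n}g_{jp}$ together with $\R_{ij}\nabla_pR_{ij}=\tfrac12\nabla_p|\Rc|^2$ gives $2m\,\R_{ij}\nabla_i\R_{jp}\nabla_p u+\tfrac{2m}{n}\Rc(\nabla u,\nabla R)-\tfrac{m}{2}\langle\nabla|\Rc|^2,\nabla u\rangle$.

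Collecting everything, the $\tfrac{u}{m}(R-n\lambda)|\Rc|^2$ contribution from (i) cancels the one from the left-hand side; the two $\Rc(\nabla u,\nabla R)$ pieces sum to $\tfrac{2(n+m)}{n}\Rc(\nabla u,\nabla R)$; and the two $\langle\nabla|\Rc|^2,\nabla u\rangle$ pieces combine to the stated coefficient $\tfrac{m+2}{2}$. The remaining geometric terms reproduce the right-hand side of the lemma verbatim. The principal obstacle is bookkeeping: verifying the $\lambda$-cancellations inside group (ii), and correctly extracting the cubic $\R_{ip}\R_{kp}\R_{ik}$ term together with the $\tfrac{2R}{n}|\Rc|^2$ piece out of $\R_{ij}R_{jp}R_{ip}$ via the decomposition $R_{kp}=\R_{kp}+\tfrac{R}{n}g_{kp}$.
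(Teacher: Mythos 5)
Your proposal is correct and follows essentially the same route as the paper: Laplacian of the identity $u\,\mathring{Ric}=m\,\mathring{\nabla^2}u$, contraction with $\R_{ij}$, Lemma~\ref{lemma viackoviski} applied to $h=u$, and substitution of the fundamental and trace equations, with all coefficients ($\tfrac{2(n+m)}{n}$, $-\tfrac{m+2}{2}$, $\tfrac{2Ru}{n}$) coming out as stated. The only difference is organizational bookkeeping (you cancel the $\lambda$-terms and the $(R-n\lambda)$-terms earlier than the paper does), which does not change the argument.
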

\begin{proof} Computing the Laplacian of (\ref{traceless Ricci}) we infer
\begin{eqnarray}\label{lemmaLaplacianoRiceq1}
m\big(\Delta \mathring{\nabla^2}u\big)_{ij}=\Delta u \mathring{R}_{ij}+u(\Delta\mathring{Ric})_{ij}+2\nabla_k\mathring{R_{ij}}\nabla_ku.
\end{eqnarray} Hence, we use (\ref{lemmaLaplacianoRiceq1}) and (\ref{laplaciano}) to obtain
\begin{eqnarray}\label{lemmaLaplacianoRiceq2}
u\langle \Delta \mathring{Ric}, \mathring{Ric}\rangle&=&u(\Delta\mathring{Ric})_{ij}\R_{ij}\nonumber\\
 &=&\left[m\big(\Delta \mathring{\nabla^2}u\big)_{ij}-\Delta u \mathring{R_{ij}}-2\nabla_k\mathring{R_{ij}}\nabla_ku\right]\R_{ij}\nonumber \\ 
&=&m\left((\Delta\nabla^2u)_{ij}-\frac{\Delta(\Delta u)}{n}g_{ij}\right)\R_{ij}-\dfrac{u}{m}\big(R-n\lambda\big)|\Rc|^2-2\nabla_k\mathring{R_{ij}}\R_{ij}\nabla_ku\nonumber\\
&=&m(\Delta\nabla^2u)_{ij}\R_{ij}-\dfrac{u}{m}\big(R-n\lambda\big)|\Rc|^2-\langle\nabla|\mathring{Ric}|^2,\nabla u\rangle.
\end{eqnarray} From Lemma \ref{lemma viackoviski} and by (\ref{laplaciano}) we have
\begin{eqnarray*}
(\Delta\nabla^2u)_{ij}\R_{ij}&=&\left[\nabla ^{2}_{ij}\Delta u+(R_{jp}g_{ik}+R_{ip}g_{jk}-2R_{ikjp})\nabla_{k}\nabla_{p}u\right.\nonumber\\
	&&+\left.(\nabla_{i}R_{jp}+\nabla_{j}R_{pi}-\nabla _{p}R_{ij})\nabla_{p}u\right]\R_{ij}\nonumber\\
	&=&\frac{1}{m}\left[(R-n\lambda)\nabla_{i}\nabla_{j}u+u\nabla_i\nabla_jR+\nabla_iu\nabla_jR+\nabla_ju\nabla_iR\right]\R_{ij}\nonumber\\
	&&+(R_{jp}\R_{jk}+R_{ip}\R_{ki}-2R_{ikjp}\R_{ij})\nabla_{k}\nabla_{p}u\nonumber\\
	&&+ \big (\nabla_{i}R_{jp}\R_{ij}+\nabla_{j}R_{pi}\R_{ij}-\nabla _{p}R_{ij}\R_{ij}\big)\nabla_{p}u\nonumber\\
	&=&\frac{R-n\lambda}{m}\langle\nabla^2u,\Rc\rangle+\frac{u}{m}\langle\nabla^2R,\Rc\rangle+\frac{2}{m}\Rc(\nabla u,\nabla R)\nonumber\\
	&&+2(R_{ip}\R_{ik}-R_{ikjp}\R_{ij})\nabla_k\nabla_pu+2\nabla_{i}R_{jp}\R_{ij}\nabla_{p}u-\nabla_p\R_{ij}\R_{ij}\nabla_pu.
\end{eqnarray*} Substituting (\ref{fundamental equation}) into the above expression we get
\begin{eqnarray*}
(\Delta\nabla^2u)_{ij}\R_{ij}&=&\frac{(R-n\lambda)u}{m^2}\langle Ric-\lambda g,\Rc\rangle+\frac{u}{m}\langle\nabla^2R,\Rc\rangle+\frac{2}{m}\Rc(\nabla u,\nabla R)\nonumber\\
	&&+\frac{2u}{m}(R_{ip}\R_{ik}-R_{ikjp}\R_{ij})(R_{kp}-\lambda g_{kp})+2\nabla_{i}R_{jp}\R_{ij}\nabla_{p}u-\frac{1}{2}\langle\nabla|\Rc|^2,\nabla u\rangle\nonumber\\
	&=&\frac{(R-n\lambda)u}{m^2}|\Rc|^2+\frac{u}{m}\langle\nabla^2R,\Rc\rangle+\frac{2}{m}\Rc(\nabla u,\nabla R)\nonumber\\
	&&+\frac{2u}{m}(R_{ip}R_{kp}\R_{ik}-R_{ikjp}\R_{ij}R_{kp})+2\nabla_{i}R_{jp}\R_{ij}\nabla_{p}u-\frac{1}{2}\langle\nabla|\Rc|^2,\nabla u\rangle\nonumber.
\end{eqnarray*} Since $\R_{ij}=R_{ij}-\dfrac{R}{n}g_{ij}$ we deduce
\begin{eqnarray*}
(\Delta\nabla^2u)_{ij}\R_{ij}&=&\frac{(R-n\lambda)u}{m^2}|\Rc|^2+\frac{u}{m}\langle\nabla^2R,\Rc\rangle+\frac{2}{m}\Rc(\nabla u,\nabla R)\nonumber\\
	&&+\frac{2u}{m}\left[(\R_{ip}+\frac{R}{n}g_{ip})(\R_{kp}+\frac{R}{n}g_{kp})\R_{ik}-R_{ikjp}\R_{ij}\R_{kp}-\frac{R}{n}R_{ij}\R_{ij}\right]\nonumber\\
	&&+2\nabla_{i}\R_{jp}\R_{ij}\nabla_{p}u+\frac{2}{n}\R_{ij}\nabla_iR\nabla_ju-\frac{1}{2}\langle\nabla|\Rc|^2,\nabla u\rangle\nonumber\\
	&=&\frac{(R-n\lambda)u}{m^2}|\Rc|^2+\frac{u}{m}\langle\nabla^2R,\Rc\rangle+\frac{2}{m}\Rc(\nabla u,\nabla R)\nonumber\\
	&&+\frac{2u}{m}\left[\R_{ip}\R_{kp}\R_{ik}+\frac{2R}{n}|\Rc|^2-R_{ikjp}\R_{ij}\R_{kp}-\frac{R}{n}|\Rc|^2\right]\nonumber\\
	&&+2\nabla_{i}\R_{jp}\R_{ij}\nabla_{p}u+\frac{2}{n}\Rc(\nabla R,\nabla u)-\frac{1}{2}\langle\nabla|\Rc|^2,\nabla u\rangle\nonumber\\
	&=&\frac{[(n+2m)R-n^2\lambda]u}{nm^2}|\Rc|^2+\frac{u}{m}\langle\nabla^2R,\Rc\rangle+\frac{2(n+m)}{nm}\Rc(\nabla u,\nabla R)\nonumber\\
	&&+\frac{2u}{m}\left(\R_{ip}\R_{kp}\R_{ik}-R_{ikjp}\R_{ij}\R_{kp}\right)+2\nabla_{i}\R_{jp}\R_{ij}\nabla_{p}u-\frac{1}{2}\langle\nabla|\Rc|^2,\nabla u\rangle\nonumber.
\end{eqnarray*} Using this last expression into (\ref{lemmaLaplacianoRiceq2}) yields
\begin{eqnarray*}
u\langle \Delta \mathring{Ric}, \mathring{Ric}\rangle&=&\frac{[(n+2m)R-n^2\lambda]u}{nm}|\Rc|^2+u\langle\nabla^2R,\Rc\rangle+\frac{2(n+m)}{n}\Rc(\nabla u,\nabla R)\nonumber\\
	&&+2u\left(\R_{ip}\R_{kp}\R_{ik}-R_{ikjp}\R_{ij}\R_{kp}\right)+2m\nabla_{i}\R_{jp}\R_{ij}\nabla_{p}u\nonumber\\
	&&-\frac{m}{2}\langle\nabla|\Rc|^2,\nabla u\rangle-\dfrac{u}{m}\big(R-n\lambda\big)|\Rc|^2-\langle\nabla|\mathring{Ric}|^2,\nabla u\rangle\nonumber\\
	&=&\frac{2Ru}{n}|\Rc|^2+u\langle\nabla^2R,\Rc\rangle+\frac{2(n+m)}{n}\Rc(\nabla u,\nabla R)+2m\nabla_{i}\R_{jp}\R_{ij}\nabla_{p}u\nonumber\\
	&&+2u\left(\R_{ip}\R_{kp}\R_{ik}-R_{ikjp}\R_{ij}\R_{kp}\right)-\frac{m+2}{2}\langle\nabla|\Rc|^2,\nabla u\rangle.
\end{eqnarray*} So, the proof is finished.

\end{proof}

\section{The boundary of a quasi-Einstein manifold}\label{secBoundary}

In this section we will study a compact $m$-quasi-Einstein manifold with no empty boundary. To begin with, we remember that $u>0$ in the interior of $M^n$ and $u=0$ on the boundary $\partial M.$ Thus, $N=-\frac{\nabla u}{|\nabla u|}$ is the outward unit vector. Besides, on the boundary, we already know that
\begin{equation}\label{HessianBoundary}
\nabla^2u=0.
\end{equation} Next, for any $X\in\mathfrak{X}(\partial M),$ we have
\begin{eqnarray*}
X(|\nabla u|^{2})=2\langle \nabla _{X}\nabla u, \nabla u \rangle =2\nabla^{2}u(X,\nabla u)=0.
\end{eqnarray*} Therefore, $|\nabla u|\neq0$ is constant along $\partial M$ (for more details, see Propositions 2.2 and 2.3 of \cite{He-Petersen-Wylie2012}).  From now on, we consider an orthonormal frame $\{e_1,\ldots,e_{n-1},e_n=-\frac{\nabla u}{|\nabla u|}\}.$ Thus from (\ref{HessianBoundary}) the second fundamental form at $\partial M$ is given by
\begin{eqnarray}\label{fundamental second}
h_{ij}=\langle \nabla _{e_{i}}N, e_{j}\rangle= -\frac{1}{|\nabla u|}\nabla _{i}\nabla _{j} u=0,
\end{eqnarray} for any $1\leq i,j\leq n-1.$ Hence, $\partial M$ with induced metric is totally geodesic. In particular, by Gauss equation $\big(R^{\partial M}_{ijkl}=R_{ijkl}-h_{il}h_{jk}+h_{ik}h_{jl}\big)$ we immediately infer
\begin{eqnarray}
 \label{GaussEq}
R^{\partial M}_{ijkl}&=&R_{ijkl},
\end{eqnarray}

\begin{eqnarray}
\label{RicciBordoM}
R^{\partial M}_{ik}&=&R_{ik}-R_{inkn}
\end{eqnarray}
and
\begin{eqnarray}
\label{EscalarBordoM}
R^{\partial M}&=&R-2R_{nn}.
\end{eqnarray}

Before to show our next lemmas we remember a well-known result that will be useful in this paper.

\begin{lemma}\label{lemmadivtensor}
Let $H$ be a $(0,2)$-tensor on a Riemannian manifold $(M^{n},g)$ then 
\begin{equation*}
\div(H(\varphi Z)) = \varphi(\div \, H)(Z)+\varphi \langle\nabla Z, H\rangle + H(\nabla \varphi, Z),
\end{equation*}
for all $Z$ $\in \mathfrak{X}(M)$ and any smooth function $\varphi$ on $M.$	
\end{lemma}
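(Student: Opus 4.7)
The plan is a one-line Leibniz computation carried out in a local orthonormal frame. Fix a point $p\in M$ and choose a local orthonormal frame $\{e_i\}$ around $p$. Viewing $H(\varphi Z)$ as the $1$-form whose components are $H_{ij}(\varphi Z)^{j}=\varphi H_{ij}Z^{j}$, one has
\begin{equation*}
\div\bigl(H(\varphi Z)\bigr) \;=\; \nabla^{i}\bigl(\varphi H_{ij}Z^{j}\bigr).
\end{equation*}
Expanding by the Leibniz rule gives three summands:
\begin{equation*}
\nabla^{i}\bigl(\varphi H_{ij}Z^{j}\bigr) \;=\; (\nabla^{i}\varphi)H_{ij}Z^{j} \;+\; \varphi(\nabla^{i}H_{ij})Z^{j} \;+\; \varphi H_{ij}\nabla^{i}Z^{j}.
\end{equation*}

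The next step is simply to recognize each of these three terms in the intrinsic language used in the statement. The first term equals $H(\nabla\varphi,Z)$ by definition of the evaluation of the $(0,2)$-tensor $H$ on two vectors. In the second term, $\nabla^{i}H_{ij}=(\div H)_{j}$ is the component of the divergence of $H$ (a $(0,1)$-tensor), so the summand is $\varphi(\div H)(Z)$. The third term is the full contraction of the $(0,2)$-tensor $H$ with the $(1,1)$-tensor $\nabla Z$, which by convention is what $\varphi\langle\nabla Z,H\rangle$ denotes. Adding the three pieces yields the claimed identity.

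There is no real obstacle; the lemma is a routine Leibniz-type identity, and the only subtlety is bookkeeping of index placement to make sure the notation $\langle\nabla Z,H\rangle$ is read with the convention compatible with the computation above. A coordinate-free alternative would be to set $\alpha(\cdot):=H(\cdot,Z)$, observe that $H(\varphi Z)=\varphi\,\alpha$, and invoke the standard formula $\div(\varphi\alpha)=\varphi\,\div\alpha+\langle\nabla\varphi,\alpha\rangle$ together with the Leibniz rule $\div(H(\cdot,Z))=(\div H)(Z)+\langle\nabla Z,H\rangle$; but the direct component computation is the shortest route.
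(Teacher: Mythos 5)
Your computation is correct: expanding $\nabla^{i}(\varphi H_{ij}Z^{j})$ by the Leibniz rule and identifying the three resulting terms as $H(\nabla\varphi,Z)$, $\varphi(\div H)(Z)$ and $\varphi\langle\nabla Z,H\rangle$ is exactly what the identity asserts, and no symmetry of $H$ is needed. Note that the paper itself offers no proof of this lemma (it is quoted as a well-known fact), so your routine frame computation simply supplies the standard verification that the authors omit; it is compatible with how the lemma is later used with $H=\mathring{Ric}$, $Z=\nabla u$, $\varphi=1$.
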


Another fact that we will useful is a combination between twice-contracted second Bianchi identity and the traceless Ricci tensor given by
\begin{equation}\label{divtracelessRic}
\div\mathring{Ric}=\frac{n-2}{2n}\nabla R.
\end{equation}

Now, we are ready to state our next lemma.

\begin{lemma}\label{lemma1111}
Let $\big(M^{n},g,u,\lambda\big)$ be a compact, oriented $m$-quasi-Einstein manifold with connected boundary. Then we have
\begin{equation*}
\dfrac{1}{m}\int_{M} u|\mathring{Ric}|^{2}dM=-\dfrac{1}{\kappa} \int_{\partial M}\mathring{Ric}(\nabla u, \nabla u)dS-\dfrac{(n-2)}{2n}\int_{M}\langle\nabla R, \nabla u \rangle dM,
\end{equation*}
where $\kappa=|\nabla u|_{|_{\partial M}}.$	
\end{lemma}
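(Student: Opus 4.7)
The plan is to start from the trace-free version of the fundamental equation, namely $u\,\mathring{Ric}=m\,\mathring{\nabla^{2}}u$ from (\ref{traceless Ricci}), take the inner product with $\mathring{Ric}$, and integrate. Because $\mathring{Ric}$ is trace-free, $\langle\mathring{Ric},\mathring{\nabla^{2}}u\rangle=\langle\mathring{Ric},\nabla^{2}u\rangle$, so I immediately obtain
\begin{equation*}
\frac{1}{m}\int_{M}u|\mathring{Ric}|^{2}\,dM=\int_{M}\langle\mathring{Ric},\nabla^{2}u\rangle\,dM.
\end{equation*}

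The next step is an integration by parts to move the second derivative of $u$ onto $\mathring{Ric}$ and produce the boundary term. Applying Lemma \ref{lemmadivtensor} with $H=\mathring{Ric}$, $Z=\nabla u$ and $\varphi\equiv 1$ gives
\begin{equation*}
\div\bigl(\mathring{Ric}(\nabla u)\bigr)=(\div\mathring{Ric})(\nabla u)+\langle\nabla^{2}u,\mathring{Ric}\rangle,
\end{equation*}
and the divergence theorem then yields
\begin{equation*}
\int_{M}\langle\nabla^{2}u,\mathring{Ric}\rangle\,dM=\int_{\partial M}\mathring{Ric}(\nabla u,N)\,dS-\int_{M}(\div\mathring{Ric})(\nabla u)\,dM.
\end{equation*}

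To finish I would substitute the two standard identities available in this setup: on $\partial M$ we have $N=-\nabla u/\kappa$ (since $u>0$ in the interior and vanishes on $\partial M$, with $|\nabla u|=\kappa$ constant on the boundary as recalled just before the lemma), so that $\mathring{Ric}(\nabla u,N)=-\kappa^{-1}\mathring{Ric}(\nabla u,\nabla u)$; and the contracted second Bianchi identity (\ref{divtracelessRic}) gives $(\div\mathring{Ric})(\nabla u)=\tfrac{n-2}{2n}\langle\nabla R,\nabla u\rangle$. Inserting both into the previous display produces exactly the claimed formula.

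There is no real obstacle here; the only thing to be careful about is the orientation convention for $N$, which is why $\kappa$ (rather than $-\kappa$) appears in the denominator of the boundary term. No properties beyond the trace-free identity (\ref{traceless Ricci}), the general divergence formula in Lemma \ref{lemmadivtensor}, and the identity (\ref{divtracelessRic}) are needed.
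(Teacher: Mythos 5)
Your proposal is correct and follows essentially the same route as the paper: both rest on the pointwise identity $\langle\mathring{Ric},\nabla^{2}u\rangle=\frac{u}{m}|\mathring{Ric}|^{2}$ coming from the fundamental equation, the divergence formula of Lemma \ref{lemmadivtensor} with $H=\mathring{Ric}$, $Z=\nabla u$, $\varphi=1$, the identity (\ref{divtracelessRic}), Stokes' theorem, and the boundary relation $N=-\nabla u/\kappa$. The only difference is cosmetic (you integrate first and then integrate by parts, while the paper establishes the divergence identity pointwise before integrating), so no further comment is needed.
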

\begin{proof} 
Firstly, we choose $H=\mathring{Ric},$ $Z=\nabla u$ and $\varphi=1$ in Lemma \ref{lemma1111} and we combine with Eqs. (\ref{fundamental equation}) and (\ref{divtracelessRic}) to obtain
\begin{eqnarray*}
\div(\mathring{Ric}(\nabla u))&=&(\div\mathring{Ric})(\nabla u)+\langle \mathring{Ric}, \nabla ^{2}u\rangle\\ 
 &=&\dfrac{(n-2)}{2n}\langle \nabla R, \nabla u \rangle + \langle \mathring{Ric},\dfrac{u}{m}[\mathring{Ric}+(\dfrac{R}{n}-\lambda)g]\rangle \\ 
 &=& \dfrac{(n-2)}{2n}\langle \nabla R, \nabla u\rangle +\dfrac{u}{m}|\mathring{Ric}|^{2}.
	\end{eqnarray*}
Integrating this expression over $M$ and using Stokes' formula we deduce
\begin{eqnarray*}
\dfrac{1}{m}\int_{M} u| \mathring{Ric}|^{2}dM &=&\int_{M}div(\mathring{Ric}(\nabla u))dM-\dfrac{(n-2)}{2n}\int_{M}\langle \nabla R, \nabla u \rangle dM \\
 &=& \int_{\partial M} \langle \mathring{Ric}(\nabla u),N\rangle dS-\dfrac{(n-2)}{2n}\int_{M}\langle \nabla R, \nabla u \rangle dM \\ 
 &=&-\dfrac{1}{|\nabla u|}_{|_{\partial M}} \int_{\partial M}\mathring{Ric}(\nabla u, \nabla u)dS-\dfrac{(n-2)}{2n}\int_{M}\langle \nabla R, \nabla u \rangle dM.
\end{eqnarray*} This finishes the proof of the lemma.
\end{proof}

The next lemma gives an expression for the integral of the scalar curvature on the boun\-dary.

\begin{lemma}\label{lemmaRboundary}
Let $\big(M^{n},g,u,\lambda\big)$ be a compact, oriented $m$-quasi-Einstein manifold with connected boundary. Then we have
\begin{equation*}
\int_{\partial M}R^{\partial M}dS=\dfrac{2}{m\kappa}\int_{M}u|\mathring{Ric}|^{2}dM-\frac{n-2}{n\kappa}\int_{M}R\Delta udM,
\end{equation*} where $\kappa=|\nabla u|_{|_{\partial M}}$ is a constant.
\end{lemma}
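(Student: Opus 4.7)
The plan is to translate the boundary integral of $R^{\partial M}$ into two bulk integrals by (a) extracting $R^{\partial M}$ from the Gauss equation, (b) invoking Lemma~\ref{lemma1111}, and (c) using the divergence theorem to eliminate the residual boundary integral $\int_{\partial M}R\,dS$.

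First I would work on $\partial M$. Since $u=0$ on $\partial M$ and $N=-\nabla u/|\nabla u|$, we have $\nabla u = -\kappa N$ along the boundary, so $\mathring{Ric}(\nabla u,\nabla u) = \kappa^{2}\bigl(R_{nn}-R/n\bigr)$. The Gauss-type formula \eqref{EscalarBordoM} gives $R_{nn}=(R-R^{\partial M})/2$, hence
\begin{equation*}
\mathring{Ric}(\nabla u,\nabla u) = \frac{\kappa^{2}}{2n}\bigl[(n-2)R - nR^{\partial M}\bigr] \quad\text{on } \partial M.
\end{equation*}
Integrating over $\partial M$ and substituting into Lemma~\ref{lemma1111} yields
\begin{equation*}
\frac{\kappa}{2}\int_{\partial M}R^{\partial M}dS = \frac{(n-2)\kappa}{2n}\int_{\partial M}R\,dS + \frac{1}{m}\int_{M}u|\mathring{Ric}|^{2}dM + \frac{n-2}{2n}\int_{M}\langle\nabla R,\nabla u\rangle dM.
\end{equation*}

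Next I would rewrite the unwanted term $\int_{\partial M}R\,dS$ as a bulk integral. Applying the divergence theorem to $\div(R\nabla u)=R\Delta u+\langle\nabla R,\nabla u\rangle$ and using $\langle\nabla u,N\rangle=-\kappa$ on $\partial M$, I get
\begin{equation*}
\kappa\int_{\partial M}R\,dS = -\int_{M}R\Delta u\,dM - \int_{M}\langle\nabla R,\nabla u\rangle dM.
\end{equation*}
Substituting this into the previous display, the two occurrences of $\int_{M}\langle\nabla R,\nabla u\rangle dM$ cancel out cleanly (they come with coefficient $\pm\frac{n-2}{2n}$), and multiplying through by $2/\kappa$ delivers exactly the asserted identity.

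The calculation is essentially algebraic once the two ingredients are lined up, so I do not anticipate any serious obstacle; the only point to keep track of is that $\kappa$ is constant on $\partial M$ (noted in the text right after \eqref{HessianBoundary}), which is what allows $1/\kappa$ to be pulled outside the boundary integral. The structural cancellation of the $\langle\nabla R,\nabla u\rangle$ terms is the reason the final formula involves $\int_{M}R\Delta u\,dM$ rather than a combination of gradient and Laplacian terms.
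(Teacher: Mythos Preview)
Your proof is correct and follows essentially the same route as the paper: both compute $\mathring{Ric}(\nabla u,\nabla u)$ on $\partial M$ via the Gauss relation \eqref{EscalarBordoM}, feed this into Lemma~\ref{lemma1111}, and then apply the divergence theorem to $R\nabla u$ so that the $\int_{\partial M}R\,dS$ and $\int_M\langle\nabla R,\nabla u\rangle\,dM$ contributions cancel. The only cosmetic difference is that the paper writes the Stokes step as $\int_M\langle\nabla R,\nabla u\rangle\,dM=\int_{\partial M}R\langle\nabla u,N\rangle\,dS-\int_M R\Delta u\,dM$ and cancels the boundary terms directly, whereas you first isolate $\kappa\int_{\partial M}R\,dS$ and then substitute; the algebra is identical.
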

\begin{proof} It is not difficult to check from (\ref{EscalarBordoM}) that $$\mathring{Ric}(N,N)=R_{nn}-\frac{R}{n}$$ and therefore, we have $$\frac{1}{\kappa^2}\mathring{Ric}(\nabla u,\nabla u)=\frac{R-R^{\partial M}}{2}-\frac{R}{n},$$ so that 
\begin{eqnarray}\label{lemmaRboundaryeq1}
 \mathring{Ric}(\nabla u,\nabla u)&=&\kappa^2\left(\frac{n-2}{2n}R-\dfrac{R^{\partial M}}{2}\right).
\end{eqnarray}
Replacing (\ref{lemmaRboundaryeq1}) into Lemma \ref{lemma1111} we obtain
\begin{eqnarray*}
\dfrac{1}{m}\int_{M} u| \mathring{Ric}|^{2}dM &=&-\kappa\int_{\partial M}\left(\frac{n-2}{2n}R-\dfrac{R^{\partial M}}{2}\right)dS-\dfrac{n-2}{2n}\int_{M}\langle \nabla R, \nabla u \rangle dM,
\end{eqnarray*}
Rearranging this expression and using Stokes'  formula we have
\begin{eqnarray*}
\dfrac{\kappa}{2}\int_{\partial M}R^{\partial M}dS&=&\dfrac{1}{m}\int_{M}u|\mathring{Ric}|^{2}dM+\dfrac{(n-2)\kappa}{2n}\int_{\partial M}R\,dS\\
 &&+\dfrac{n-2}{2n}\left(\int_{\partial M}R\langle\nabla u,N\rangle dS-\int_{M}R\Delta udM\right)\\
 &=&\dfrac{1}{m}\int_{M}u|\mathring{Ric}|^{2}dM+\dfrac{(n-2)\kappa}{2n}\int_{\partial M}R\,dS\\
 &&-\dfrac{(n-2)\kappa}{2n}\int_{\partial M}RdS-\frac{n-2}{2n}\int_{M}R\Delta udM\\
 &=&\dfrac{1}{m}\int_{M}u|\mathring{Ric}|^{2}dM-\frac{n-2}{2n}\int_{M}R\Delta udM,
\end{eqnarray*}
as we wanted to prove.
\end{proof}

To finish this section we going to compute the Euler characteristic of the boundary $\partial M$ of a $5$-dimensional compact quasi-Einstein manifold $(M^{5},g,u,\lambda).$ Similar result was obtained previously for $V$-static spaces in \cite{Barbosa-Lima-Freitas}.

\begin{proposition}\label{propGauss-Bonnet-Chern}
Let $(M^{5},g,u,\lambda)$ be a compact quasi-Einstein whose the boundary is a connected Einstein manifold. Suppose that the scalar curvature satisfies $R\leq 5\lambda.$ Then it holds
\begin{equation}
8\pi^2\chi(\partial M) \geq \frac{3}{200}R_{min}^{2}|\partial M|.
\end{equation} Moreover, the equality holds if and only if  $(M^{5},g)$ is isometric, up to scaling, to $\mathbb{S}^{5}_{+}.$
\end{proposition}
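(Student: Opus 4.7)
The plan is to combine the Chern--Gauss--Bonnet formula on the $4$-dimensional Einstein boundary with Lemma \ref{lemmaRboundary} to produce a lower bound on the (constant) scalar curvature $R^{\partial M}$ in terms of $R_{\min}$.

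First, since $\partial M$ is a connected $4$-dimensional Einstein manifold, its traceless Ricci tensor vanishes and its scalar curvature $R^{\partial M}$ is constant. The Chern--Gauss--Bonnet formula in dimension four gives
\begin{equation*}
8\pi^{2}\chi(\partial M)=\int_{\partial M}\left(\frac{|W^{\partial M}|^{2}}{4}-\frac{|\mathring{Ric}^{\partial M}|^{2}}{2}+\frac{(R^{\partial M})^{2}}{24}\right)dS\geq\frac{(R^{\partial M})^{2}}{24}|\partial M|,
\end{equation*}
after dropping the nonnegative Weyl contribution and using $\mathring{Ric}^{\partial M}\equiv 0$.

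Next, I would specialize Lemma \ref{lemmaRboundary} to $n=5$ and substitute $\Delta u=u(R-5\lambda)/m$ from (\ref{laplaciano}) to rewrite the boundary identity as
\begin{equation*}
R^{\partial M}|\partial M|=\frac{2}{m\kappa}\int_{M}u|\mathring{Ric}|^{2}\,dM+\frac{3}{5m\kappa}\int_{M}uR(5\lambda-R)\,dM.
\end{equation*}
The hypothesis $R\leq 5\lambda$ together with $R>0$ (from the positivity remark after Example \ref{example2}) makes the second integrand nonnegative, and $R\geq R_{\min}$ gives $uR(5\lambda-R)\geq R_{\min}\,u(5\lambda-R)$. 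The divergence theorem yields
\begin{equation*}
\int_{M}u(5\lambda-R)\,dM=-m\int_{M}\Delta u\,dM=-m\int_{\partial M}\langle\nabla u,N\rangle\,dS=m\kappa|\partial M|,
\end{equation*}
so combining the last displays,
\begin{equation*}
R^{\partial M}|\partial M|\geq\frac{2}{m\kappa}\int_{M}u|\mathring{Ric}|^{2}\,dM+\frac{3R_{\min}}{5}|\partial M|\geq\frac{3R_{\min}}{5}|\partial M|.
\end{equation*}
Feeding $R^{\partial M}\geq 3R_{\min}/5$ into the Chern--Gauss--Bonnet inequality gives
\begin{equation*}
8\pi^{2}\chi(\partial M)\geq\frac{(R^{\partial M})^{2}}{24}|\partial M|\geq\frac{9R_{\min}^{2}}{600}|\partial M|=\frac{3}{200}R_{\min}^{2}|\partial M|,
\end{equation*}
which is the claimed estimate.

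For the rigidity, equality forces each inequality above to saturate: $|W^{\partial M}|\equiv 0$; $u|\mathring{Ric}|^{2}\equiv 0$ on $M$, so by $u>0$ in the interior and continuity $\mathring{Ric}\equiv 0$ (i.e.\ $M$ is Einstein); and $(R-R_{\min})(5\lambda-R)\equiv 0$. Connectedness of $M$ and continuity of $R$ split this last condition into $R\equiv R_{\min}$ or $R\equiv 5\lambda$; but the latter would make $u$ harmonic with $u|_{\partial M}=0$, which by the maximum principle forces $u\equiv 0$, contradicting $u>0$ inside. Hence $R\equiv R_{\min}$ is constant and the fundamental equation reduces to $\nabla^{2}u=c\,u\,g$ with $c=(R_{\min}-5\lambda)/(5m)<0$. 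Invoking a Tashiro/Obata-type classification of compact Einstein manifolds admitting a nontrivial concircular gradient potential, together with the boundary condition $u|_{\partial M}=0$, identifies $(M^{5},g)$, up to scaling, with the round hemisphere $\mathbb{S}^{5}_{+}$ of Example \ref{example}. The main obstacle is this rigidity step: the algebraic bookkeeping of the inequalities is routine for $n=5$, while promoting $\nabla^{2}u=cug$ on a compact Einstein manifold with connected zero set of $u$ to a genuine isometry with $\mathbb{S}^{5}_{+}$ requires the careful invocation of the structural theorem on concircular gradient fields.
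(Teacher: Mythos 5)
Your proposal is correct and follows essentially the same route as the paper: Lemma \ref{lemmaRboundary} combined with $R\le 5\lambda$, $R\ge R_{\min}>0$ and the divergence theorem yields $\int_{\partial M}R^{\partial M}dS\ge \tfrac{3}{5}R_{\min}|\partial M|$, which is then fed into the four-dimensional Chern--Gauss--Bonnet formula on the Einstein boundary. The only cosmetic differences are that you exploit the constancy of $R^{\partial M}$ where the paper invokes H\"older's inequality, and in the equality case you re-derive the rigidity by hand via an Obata-type argument, whereas the paper simply cites Proposition 2.4 of \cite{He-Petersen-Wylie2012} once $\int_M u|\mathring{Ric}|^2\,dM=0$ forces $(M^{5},g)$ to be Einstein.
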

\begin{proof}
Since $R\leq 5\lambda$ we infer that $\Delta u = \frac{u}{m}(R-5\lambda)\leq 0.$  Taking into account that the scalar curvature is positive, we have $R_{min}\Delta u \geq R \Delta u.$ Therefore, it follows that
\begin{eqnarray*}
\int_MR\Delta udM&\leq&R_{min}\int_M\Delta udM=R_{min}\int_{\partial M}\langle\nabla u,N\rangle dS\nonumber\\
	&=&-\kappa R_{min}|\partial M|.
\end{eqnarray*} Combining this with Lemma \ref{lemmaRboundary} we obtain
\begin{eqnarray*}
\int_{\partial M}R^{\partial M}dS&\geq &\frac{2}{m\kappa}\int_{M}u|\mathring{Ric}|^{2}dM+\frac{3}{5} R_{min}|\partial M| \nonumber \\ 
 &\geq& \frac{3}{5} R_{min}|\partial M|.
\end{eqnarray*} This jointly with the H\"older-inequality we infer
\begin{eqnarray}\label{propGauss-Bonnet-Cherneq1}
\frac{9}{25}R_{min}^{2}|\partial M|^2\leq \Big(\int_{\partial M}R^{\partial M}dS\Big)^{2}\leq\int_{\partial M} (R^{\partial M})^{2}dS|\partial M|.
\end{eqnarray} Now, by using the Gauss-Bonnet-Chern's formula and the fact that the boundary of $M$ is an Einstein manifold we deduce 
\begin{eqnarray}\label{propGauss-Bonnet-Cherneq2}
8\pi ^{2}\chi(\partial M)&=&\frac{1}{4}\int_{\partial M}|W^{\partial M}|^{2}dS + \dfrac{1}{24}\int_{\partial M} (R^{\partial M})^{2}dS\nonumber\\
 &\geq& \frac{3}{200}R_{min}^{2}|\partial M|.
\end{eqnarray} Finaly, the equality holds in (\ref{propGauss-Bonnet-Cherneq2}) if and only if the equality also holds in (\ref{propGauss-Bonnet-Cherneq1}). This implies that $(M^{5},g)$ is an Einstein manifold and therefore, the result follows by Proposition 2.4 of \cite{He-Petersen-Wylie2012}.	So, the proof is finished.
\end{proof}

\section{Proof of the  main results}\label{secProofs}

\subsection{Proof of Theorem \ref{th1}}

\begin{proof} 	
The first part of the proof is standard, and it follows the same steps of Theorem 1.5 of \cite{BS} (see also Theorem 2 in \cite{Batista-Diogenes-Ranieri-Ribeiro JR}). We include it here for sake of completeness. Initially, we have
\begin{eqnarray*}
Ric^{\partial M}(V,V)&\geq&\dfrac{R^{\partial M}}{n-1}g_{\partial M}(V,V) \nonumber \\
&=&\dfrac{R^{\partial M}}{n-1},
\end{eqnarray*} where $V\in T \partial M$ with $|V|_{g}=1$. Since $\inf _{\partial M} R^{\partial M} > 0$ there is a $\delta >0$ such that
\begin{eqnarray*}
R(\partial M, g_{\partial M})&=&\inf\big\{Ric^{\partial M}(V,V)/ V\in T \partial M, |V|_{g}=1\big\} \nonumber \\
&=&(n-2)\delta.
\end{eqnarray*} Therefore, we have
\begin{eqnarray*}
Ric^{\partial M}\geq (n-2)\delta g_{\partial M}
\end{eqnarray*} and consequently, by Bonnet-Myers Theorem we arrive at
\begin{eqnarray*}
diam_{g_{\partial M}}(\partial M)\leq \dfrac{\pi}{\sqrt{\delta}}.
\end{eqnarray*} Now, by Bishop-Gromov Theorem it follows that
\begin{eqnarray*}
|B_{\rho}(p)|\leq |B^{\delta}_{\rho}(p)|,
\end{eqnarray*} with $\rho \leq \dfrac{\pi}{\sqrt{\delta}}$, $p \in \partial M$ and $|B^{\delta}_{\rho}(p)|$ denotes the volume of a ball in a sphere $S^{n-1}$ with the metric $g_{\delta}=\delta ^{-1}g_{can}$ and radius $\rho$. In particular, we infer
\begin{eqnarray}\label{th1eq01}
|B_{\frac{\pi}{\sqrt{\delta}}}(p)|\leq |S^{n-1}|_{g_{\delta}}.
\end{eqnarray} Taking into account that $|S^{n-1}|_{g_{\delta}}=\delta^{-\dfrac{(n-1)}{2}} \omega_{n-1}$ and (\ref{th1eq01}) we obtain
\begin{eqnarray*}
|\partial M | &\leq& |B_{\dfrac{\pi}{\sqrt{\delta}}}(p)| \nonumber \\
&\leq& |S^{n-1}|g_{\delta} \nonumber \\
&=&\delta ^{-\dfrac{(n-1)}{2}}\omega_{n-1},
\end{eqnarray*} which we can rewrite succinctly as
\begin{eqnarray}\label{th1eq02}
\big(\omega_{n-1}\big)^{\dfrac{2}{n-1}}\geq |\partial M |^{\dfrac{2}{n-1}}\delta.
\end{eqnarray}

Since $\partial M$ is compact, there exists $X \in T\partial M$ of unit norm and such that 
\begin{eqnarray}\label{th1eq03}
(n-2)\delta &=& R(\partial M, g_{\partial M}) \nonumber \\
&=& Ric^{\partial M}(X,X).
\end{eqnarray} Notice also that
\begin{eqnarray}\label{th1eq04}
Ric^{\partial M}\geq \dfrac{R^{\partial M}}{n-1}g_{\partial M}.
\end{eqnarray} Hence, combining (\ref{th1eq03}) and (\ref{th1eq04}) implies 
\begin{eqnarray*}
(n-2)\delta \geq \dfrac{R^{\partial M}}{n-1},
\end{eqnarray*}
consequently,
\begin{eqnarray}\label{th1eq05}
\delta \geq \dfrac{R^{\partial M}}{(n-1)(n-2)}.
\end{eqnarray} Thus, substituting (\ref{th1eq05}) into (\ref{th1eq02}) we arrive at
\begin{eqnarray*}
\big(\omega_{n-1}\big)^{\dfrac{2}{n-1}}\geq |\partial M|^{\dfrac{2}{n-1}}\dfrac{R^{\partial M}}{(n-1)(n-2)}.
\end{eqnarray*} Thus, we have
\begin{eqnarray*}
(n-1)(n-2)\big (\omega_{n-1}\big )^{\frac{2}{n-1}}\geq R^{\partial M}|\partial M|^{\frac{2}{n-1}}. 
\end{eqnarray*} Next, upon integrating this expression over $\partial M$ we obtain
\begin{eqnarray*}
(n-1)(n-2)(\omega_{n-1})^\frac{2}{n-1}|\partial M|&\geq&|\partial M|^\frac{2}{n-1}\int_{\partial M}R^{\partial M}dS,
\end{eqnarray*} 
so that
\begin{eqnarray*}
(n-1)(n-2)(\omega_{n-1})^\frac{2}{n-1}|\partial M|^{\frac{n-3}{n-1}}&\geq&\,\int_{\partial M}R^{\partial M}dS.
\end{eqnarray*} Now, it suffices to invoke Lemma \ref{lemmaRboundary} to infer
\begin{equation}\label{th1eq1}
(n-1)(n-2)(\omega_{n-1})^\frac{2}{n-1}|\partial M|^{\frac{n-3}{n-1}}\geq\dfrac{2}{m\kappa}\int_{M}u|\mathring{Ric}|^{2}dM-\frac{n-2}{n\kappa}\int_{M}R\Delta udM.
\end{equation}

Proceeding, using that $R\leq n\lambda,$ we obtain $\Delta u=\frac{u}{m}(R-n\lambda)\leq0.$ Therefore, we deduce $$R_{min}\Delta u\geq R\Delta u$$ (we remark that in general $R\geq\frac{n(n-1)}{m+n-1}\lambda,$ see Remark 5.1 of \cite{He-Petersen-Wylie2012}). Thus, we obtain
\begin{eqnarray}\label{th1eq2}
\int_MR\Delta udM&\leq&R_{min}\int_M\Delta udM=R_{min}\int_{\partial M}\langle\nabla u,N\rangle dS\nonumber\\
 &=&-\kappa R_{min}|\partial M|.
\end{eqnarray}
Replacing (\ref{th1eq2}) into (\ref{th1eq1}) we deduce
\begin{eqnarray}\label{th1eq3}
(n-1)(n-2)(\omega_{n-1})^\frac{2}{n-1}|\partial M|^{\frac{n-3}{n-1}}&\geq&\dfrac{2}{m\kappa}\int_{M}u|\mathring{Ric}|^{2}dM+\frac{n-2}{n}R_{min}|\partial M|\nonumber\\
 &\geq&\frac{n-2}{n}R_{min}|\partial M|.
\end{eqnarray} This can be rewrite as follows
\begin{equation}\label{th1eq4}
|\partial M|\leq{\omega_{n-1}}\left(\frac{n(n-1)}{R_{min}}\right)^{\frac{n-1}{2}}.
\end{equation} Finally, the equality holds in (\ref{th1eq4}) if and only if the equality holds in (\ref{th1eq3}) and this therefore implies $\mathring{Ric}=0,$ that is, $(M,\,g)$ is an Einstein manifold. So, it suffices to apply Proposition 2.4 of \cite{Petersen and Chenxu} to conclude that $(M,g)$ is the hemisphere $\Bbb{S}^n_+.$ This finishes the proof of the theorem. 
\end{proof}

\subsection{Proof of Theorem \ref{th2}}

\begin{proof}
Since $R\leq 3\lambda,$ then $\Delta u=\frac{u}{m}(R-3\lambda)\leq0.$ Therefore, we obtain $R_{min}\Delta u\geq R\Delta u.$ Thus, it follows that
\begin{eqnarray*}
\int_MR\Delta udM&\leq&R_{min}\int_M\Delta udM=R_{min}\int_{\partial M}\langle\nabla u,N\rangle dS\nonumber\\
 &=&-\kappa R_{min}|\partial M|.
\end{eqnarray*} Combining this with Lemma \ref{lemmaRboundary} we infer
\begin{equation}\label{th2eq1}
\int_{\partial M}R^{\partial M}dS\geq\frac{2}{m\kappa}\int_Mu|\mathring{Ric}|^2dM+\frac{R_{min}}{3}|\partial M|.
\end{equation} We remember that in dimension three we have $R^{\partial M}=2K,$ where $K$ is the Gaussian curvature of $\partial M.$ Therefore, by (\ref{th2eq1}) we obtain
\begin{equation*}
2\int_{\partial M}KdS\geq\frac{2}{m\kappa}\int_Mu|\mathring{Ric}|^2dM+\frac{R_{min}}{3}|\partial M|
\end{equation*} and hence, we deduce 
\begin{equation}\label{th2eq2}
\int_{\partial M}KdS\geq\frac{R_{min}}{6}|\partial M|>0
\end{equation} Next, by Gauss-Bonnet's Theorem we conclude that $\partial M$ is a $2$-sphere and thus $$\int_{\partial M} KdS = 4\pi.$$ Finally, by (\ref{th2eq2}) we arrive at
\begin{equation}\label{th2eq3}
|\partial M|\leq\dfrac{24\pi}{R_{min}}.
\end{equation} Moreover, the equality holds if and only if the equality holds in (\ref{th2eq2}). In this case, $(M^{3},g)$ is an Einstein manifold and by Proposition 2.4 of \cite{Petersen and Chenxu}, $(M^{3},\,g)$ must be isometric to $\Bbb{S}^3_+,$ as asserted.  
\end{proof}

\subsection{Proof of Theorem \ref{th3}}

\begin{proof}
To begin with, notice that 
\begin{eqnarray*}
\frac{1}{2}\div \big(u\nabla|\mathring{Ric}|^{2}\big)&=&\frac{1}{2}\nabla_k\big(u\nabla_k(\mathring{R}_{ij})^2\big)=\nabla_k(u\mathring{R}_{ij}\nabla_k\mathring{R}_{ij})\nonumber\\
 &=&\nabla_k\R_{ij}\R_{ij}\nabla_ku+u\nabla_k\R_{ij}\nabla_k\R_{ij}+u\R_{ij}\nabla_k\nabla_k\R_{ij}\nonumber\\
&=&\frac{1}{2}\langle \nabla|\mathring{Ric}|^2,\nabla u\rangle +u\langle \Delta \mathring{Ric}, \mathring{Ric} \rangle + u |\nabla\mathring{Ric}|^{2}.
	\end{eqnarray*} This jointly with Lemma \ref{lemmaLaplacianoRic} we deduce
\begin{eqnarray}\label{th3eq1}
\frac{1}{2}\div \big(u\nabla|\mathring{Ric}|^{2}\big)&=&\frac{1}{2}\langle \nabla|\mathring{Ric}|^2,\nabla u\rangle+u|\nabla\mathring{Ric}|^{2}+\frac{2Ru}{n}|\Rc|^2+u\langle\nabla^2R,\Rc\rangle\nonumber\\
 &&+\frac{2(n+m)}{n}\Rc(\nabla u,\nabla R)+2m\nabla_{i}\R_{jp}\R_{ij}\nabla_{p}u-\frac{m+2}{2}\langle\nabla|\Rc|^2,\nabla u\rangle\nonumber\\
	&&+2u\R_{ip}\R_{kp}\R_{ik}-2uR_{ikjp}\R_{ij}\R_{kp}\nonumber\\
	&=&u|\nabla\mathring{Ric}|^{2}+\frac{2Ru}{n}|\Rc|^2+u\langle\nabla^2R,\Rc\rangle+\frac{2(n+m)}{n}\Rc(\nabla u,\nabla R)\nonumber\\
	&&+2m\nabla_{i}\R_{jp}\R_{ij}\nabla_{p}u-\frac{m+1}{2}\langle\nabla|\Rc|^2,\nabla u\rangle+2u\R_{ip}\R_{kp}\R_{ik}\nonumber\\
	&&-2uR_{ikjp}\R_{ij}\R_{kp}
	\end{eqnarray}
On the other hand, by (\ref{cotton tensor}) and the fact that the Cotton tensor is skew-symmetric in the first two indices we infer
\begin{eqnarray*}
\nabla_{i}\R_{jp}\R_{ij}\nabla_{p}u&=&\left(\nabla_iR_{jp}-\frac{1}{n}\nabla_iRg_{jp}\right)\R_{ij}\nabla_pu\nonumber\\
 &=&\left(C_{ipj}+\nabla_pR_{ij}+\frac{1}{2(n-1)}(\nabla_iRg_{pj}-\nabla_pRg_{ij})\right)\R_{ij}\nabla_pu-\frac{1}{n}\Rc(\nabla R,\nabla u)\nonumber\\
 &=&C_{ipj}\R_{ij}\nabla_pu+\nabla_pR_{ij}\R_{ij}\nabla_pu+\frac{1}{2(n-1)}\Rc(\nabla R,\nabla u)-\frac{1}{n}\Rc(\nabla R,\nabla u)\nonumber\\
 &=&-C_{pij}\R_{ij}\nabla_pu+\frac{1}{2}\langle\nabla|\Rc|^2,\nabla u\rangle-\frac{n-2}{2n(n-1)}\Rc(\nabla R,\nabla u).
\end{eqnarray*} This combined with Lemma \ref{lemmaCottonNorm} yields
\begin{eqnarray}\label{th3eq2}
\nabla_{i}\R_{jp}\R_{ij}\nabla_{p}u&=&\frac{(n-2)u}{2(m+n-2)}|C|^2-\frac{m(n-2)}{2(m+n-2)}C_{ijk}W_{ijkl}\nabla_lu\nonumber\\
 &&+\frac{1}{2}\langle\nabla|\Rc|^2,\nabla u\rangle-\frac{n-2}{2n(n-1)}\Rc(\nabla R,\nabla u).
\end{eqnarray}

Proceeding, it follows from (\ref{weyl tensor}) that
\begin{eqnarray}\label{th3eq3}
R_{ikjp}\R_{ij}\R_{kp}&=&W_{ikjp}\R_{ij}\R_{kp}+\frac{1}{n-2}(R_{ij}g_{kp}+R_{kp}g_{ij}-R_{ip}g_{kj}-R_{kj}g_{ip})\R_{ij}\R_{pk}\nonumber\\
 &&-\frac{R}{(n-1)(n-2)}(g_{kp}g_{ij}-g_{ip}g_{kj})\R_{ij}\R_{kp}\nonumber\\
 &=&W_{ikjp}\R_{ij}\R_{kp}-\frac{2}{n-2}R_{ip}\R_{ij}\R_{pj}+\frac{R}{(n-1)(n-2)}|\Rc|^2\nonumber\\
 &=&W_{ikjp}\R_{ij}\R_{kp}-\frac{2}{n-2}\R_{ip}\R_{ij}\R_{pj}-\frac{2R}{n(n-2)}|\Rc|^2+\frac{R}{(n-1)(n-2)}|\Rc|^2\nonumber\\
 &=&W_{ikjp}\R_{ij}\R_{kp}-\frac{2}{n-2}\R_{ip}\R_{ij}\R_{pj}-\frac{R}{n(n-1)}|\Rc|^2.
\end{eqnarray}
Replacing (\ref{th3eq2}) and (\ref{th3eq3}) into (\ref{th3eq1}) we obtain
\begin{eqnarray}
\frac{1}{2}\div \big(u\nabla|\mathring{Ric}|^{2}\big)&=&u|\nabla\mathring{Ric}|^{2}+\frac{2Ru}{n}|\Rc|^2+u\langle\nabla^2R,\Rc\rangle+\frac{2(n+m)}{n}\Rc(\nabla u,\nabla R)\nonumber\\
	&&+\frac{m(n-2)u}{m+n-2}|C|^2-\frac{m^2(n-2)}{m+n-2}C_{ijk}W_{ijkl}\nabla_lu+m\langle\nabla|\Rc|^2,\nabla u\rangle\nonumber\\
	&&-\frac{m(n-2)}{n(n-1)}\Rc(\nabla R,\nabla u)-\frac{m+1}{2}\langle\nabla|\Rc|^2,\nabla u\rangle+2u\R_{ip}\R_{kp}\R_{ik}\nonumber\\
	&&-2uW_{ikjp}\R_{ij}\R_{kp}+\frac{4u}{n-2}\R_{ip}\R_{ij}\R_{pj}+\frac{2Ru}{n(n-1)}|\Rc|^2\nonumber\\
	&=&u|\nabla\mathring{Ric}|^{2}+\frac{m(n-2)u}{m+n-2}|C|^2+u\langle\nabla^2R,\Rc\rangle+\frac{m+2n-2}{n-1}\Rc(\nabla u,\nabla R)\nonumber\\
	&&+\frac{m-1}{2}\langle\nabla|\Rc|^2,\nabla u\rangle+\frac{2Ru}{n-1}|\Rc|^2+\frac{2nu}{n-2}\R_{ip}\R_{ij}\R_{pj}\nonumber\\
	&&-2uW_{ikjp}\R_{ij}\R_{kp}-\frac{m^2(n-2)}{m+n-2}C_{ijk}W_{ijkl}\nabla_lu\nonumber,
\end{eqnarray} which finishes the proof of the theorem. 
\end{proof}

\subsection{Proof of Corollary \ref{corWeylZero}}

\begin{proof}
Initially, taking into account that $M^n$ has constant scalar curvature and zero radial Weyl curvature, we may use Theorem \ref{th3} to infer
\begin{eqnarray}\label{corWeylZeroeq1}
\frac{1}{2}\div \big(u\nabla|\mathring{Ric}|^{2}\big)&=&u|\nabla\mathring{Ric}|^{2}+\frac{m(n-2)u}{m+n-2}|C|^2+\frac{m-1}{2}\langle\nabla|\Rc|^2,\nabla u\rangle\nonumber\\
&&+\frac{2Ru}{n-1}|\Rc|^2+\frac{2nu}{n-2}{\rm tr}(\Rc^3)-2uW_{ikjp}\R_{ij}\R_{kp}.
\end{eqnarray} 

On the other hand, we use  Eqs. (\ref{traceless Ricci}) and (\ref{cottonwyel}) and that $M^n$ has zero radial Weyl curvature in order to obtain
\begin{eqnarray*}
uW_{ijkl}\R_{ik}\R_{jl}&=&mW_{ijkl}\R_{ik}(\nabla_j\nabla_lu-\frac{\Delta u}{n}g_{jl})\nonumber\\
&=&m\nabla_j(W_{ijkl}\R_{ik}\nabla_lu)-m\nabla_jW_{ijkl}\nabla_lu\R_{ik}-mW_{ijkl}\nabla_j\R_{ik}\nabla_lu\nonumber\\
&=&\frac{m(n-3)}{n-2}C_{kli}\nabla_lu\R_{ik}\nonumber\\
&=&\frac{m(n-3)}{2(n-2)}(C_{kli}R_{ik}\nabla_lu+C_{lki}R_{il}\nabla_ku)\nonumber\\
&=&\frac{m(n-3)}{2(n-2)}C_{kli}(R_{ik}\nabla_lu-R_{il}\nabla_ku),
\end{eqnarray*} where we used that the Cotton tensor is trace-free. Combining this expression with Lemma \ref{CWT} and (\ref{T-tensor}) we deduce
\begin{eqnarray}\label{corWeylZeroeq2}
uW_{ijkl}\R_{ik}\R_{jl}&=&\frac{m(n-3)}{2(m+n-2)}C_{kli}T_{kli}\nonumber\\
&=&\frac{m(n-3)u}{2(m+n-2)}|C|^2.
\end{eqnarray}

Proceeding, since $R$ is constant, we can invoke Proposition 3.3 of \cite{Petersen and Chenxu} (see also Lemma 3.2 of \cite{CaseShuWey}) to deduce
\begin{eqnarray*}
|\Rc|^2=-\frac{m+n-1}{n(m-1)}(R-n\lambda)\left(R-\frac{n(n-1)}{m+n-1}\lambda\right).
\end{eqnarray*} In particular, $|\Rc|^2$ is also constant. Thus, substituting (\ref{corWeylZeroeq2}) into (\ref{corWeylZeroeq1}) and using that $|\Rc|^2$ is constant we get
\begin{eqnarray}\label{corWeylZeroeq3}
0&=&u|\nabla\mathring{Ric}|^{2}+\frac{m(n-2)u}{m+n-2}|C|^2+\frac{2Ru}{n-1}|\Rc|^2\nonumber\\
 &&+\frac{2nu}{n-2}{\rm tr}(\Rc^3)-\frac{m(n-3)u}{m+n-2}|C|^2\nonumber\\
&=&u|\nabla\mathring{Ric}|^{2}+\frac{mu}{m+n-2}|C|^2+\frac{2Ru}{n-1}|\Rc|^2+\frac{2nu}{n-2}{\rm tr}(\Rc^3).
\end{eqnarray} From the Okumura's Lemma (see Lemma 2.1 \cite{Okumura}) we obtain
$${\rm trace}(\Rc^3)\geq-\frac{n-2}{\sqrt{n(n-1)}}|\Rc|^3.$$
Replacing this into (\ref{corWeylZeroeq3}) we get
\begin{eqnarray}\label{corWeylZeroeq4}
0&\geq&u|\nabla\mathring{Ric}|^{2}+\frac{mu}{m+n-2}|C|^2+\frac{2Ru}{n-1}|\Rc|^2-\frac{2nu}{\sqrt{n(n-1)}}|\Rc|^3\nonumber\\
 &\geq &u|\nabla\mathring{Ric}|^{2}+\frac{mu}{m+n-2}|C|^2+\frac{2n}{\sqrt{n(n-1)}}\left(\frac{R}{\sqrt{n(n-1)}}-|\Rc|\right)u|\Rc|^2.
\end{eqnarray} 
Finally, as $R$ is positive and using that $|\Rc|^2<\frac{R^2}{n(n-1)}$  in (\ref{corWeylZeroeq4}), we conclude that $\Rc=0$ and therefore, $(M^n,g)$ is an Einstein manifold. Now, it suffices to apply Proposition 2.4 of \cite{Petersen and Chenxu} to conclude that $(M^{n},\,g)$ is isometric to the hemisphere $\Bbb{S}^n_+.$ So, the proof is finished. 
\end{proof}

\begin{acknowledgement}
The authors want to thank the referee for careful rea\-ding and valuable suggestions. Moreover, they want to thank E. Ribeiro Jr. for helpful conversations about this subject. 
\end{acknowledgement}

\end{document}